\documentclass{article}
\usepackage[left=4cm, right=4cm, top=4cm]{geometry} 
\usepackage{amsmath,amsthm,amssymb,mathrsfs,amstext, titlesec}
\makeatletter

\titleformat{\section}
{\normalfont\Large\bfseries}{\thesection}{1em}{}
\titleformat*{\subsection}{\Large\bfseries}
\titleformat*{\subsubsection}{\large\bfseries}
\titleformat*{\paragraph}{\large\bfseries}
\titleformat*{\subparagraph}{\large\bfseries}

\renewcommand{\@seccntformat}[1]{\csname the#1\endcsname. }
\makeatother
\theoremstyle{plain}
\newtheorem{theorem}{Theorem}[section]

\newtheorem{lemma}[theorem]{Lemma}
\theoremstyle{definition}
\newtheorem{definition}[theorem]{Definition}
\newtheorem{observation}[theorem]{Observation}

\providecommand{\keywords}[1]{{\bf{Keywords:}} #1}
\providecommand{\subjectclass}[2]{\textbf{Mathematics subject classification:} #1}

\title{Central Sets Theorem along filters and some combinatorial
consequences
\footnote{\keywords{Ramsey Theory, The Central Sets Theorem, Algebra of the Stone-\v{C}ech compactification}}}

\date{}
\author{Sayan Goswami
\footnote{Department of Mathematics, 
           University of Kalyani, 
           Kalyani-741235,
           Nadia, West Bengal, India
           {\bf sayan92m@gmail.com}, {\bf sgmathku17@klyuniv.ac.in}}
           \and
Jyotirmoy Poddar
\footnote{Department of Mathematics, 
          University of Kalyani, 
          Kalyani-741235,
          Nadia, West Bengal, India
          {\bf jyotirmoypoddar1@gmail.com}}
}

\begin{document}
\maketitle

\begin{abstract}
\noindent The Central Sets Theorem was introduced by H. Furstenberg
and then afterwards several mathematicians have provided various versions
and extensions of this theorem. All of these theorems deal with central
sets, and its origin from the algebra of Stone-\v{C}ech compactification
of arbitrary semigroup, say $\beta S$. It can be proved that every
closed subsemigroup of $\beta S$ is generated by a filter. We will
show that, under some restrictions, one can derive the Central Sets
Theorem for any closed subsemigroup of $\beta S$. We will derive
this theorem using the corresponding filter and its algebra. Later
we will also deal with how the notions of largeness along filters
are preserved under some well behaved homomorphisms and give some
consequences.
\end{abstract}
\subjectclass{05D10}

\tableofcontents

\section{Introduction}

The study of large sets in Ramsey theory has a long history. It uses
special techniques from topological dynamics, ergodic theory, the
algebra of Stone-\v{C}ech compactification, combinatorics etc. Normally,
in Ramsey theory, we search for rich combinatorial structures contained
in various large sets. In \cite[Proposition 8.21]{key-28} , H. Furstenberg defined
the Central sets using the notions from topological dynamics, and proved
the Central Sets Theorem dynamically. Central Sets Theorem, basically
is the joint extension of the two famous theorems, van der Waerden's
theorem \cite{key-65}, regarding monochromatic configurations of arithmetic
progressions of some finite length, and Hindman's theorem \cite{key-35},
regarding the monochromatic configurations of finite sums of an infinite
sequence. Before proceeding further, we mention that throughout this article, we will use the following notation.
\begin{itemize}
\item $\mathcal{P}_{f}\left(X\right)$ denotes the family of all non empty finite
subsets of a set $X$.
\item For any semigroup $(S,+)$, and $t\in S$ and $A\subseteq S$, $t^{-1}A={s\in S: ts\in A}$.
\item For any $A,B\subseteq S$ let us denote $B^{-1}A=\bigcup_{t\in B}t^{-1}A$.
\end{itemize}
The original Central Sets Theorem is stated
below.
\begin{theorem} [The original Central Sets Theorem]
Let $A$ be a central subset of
$\mathbb{N}$, let $k\in\mathbb{N}$, and for each $i\in\left\{ 1,2,\ldots,k\right\} $,
let $\langle y_{i,n}\rangle_{n=1}^{\infty}$be a sequence in $\mathbb{Z}$.
There exist sequences $\langle a_{n}\rangle_{n=1}^{\infty}$ in $\mathbb{N}$
and $\langle H_{n}\rangle_{n=1}^{\infty}$ in $\mathcal{P}_{f}\left(\mathbb{N}\right)$
such that
   \begin{enumerate}
\item For each $n\in\mathbb{N}$, $\max H_{n}<\min H_{n+1}$ , and
\item For each $i\in\left\{ 1,2,\ldots,k\right\} $, and $F\in\mathcal{P}_{f}\left(\mathbb{N}\right)$,
we have
\[
\sum_{n\in F}\left(a_{n}+\sum_{t\in H_{n}}y_{i,t}\right)\in A.
\]
  \end{enumerate}
\end{theorem}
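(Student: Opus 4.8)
The plan is to recast the hypothesis ``$A$ is central'' in the language of $\beta\mathbb{N}$ and then run the classical $C^{\star}$-recursion. Recall that $A$ being central means $A\in p$ for some idempotent $p=p+p$ lying in a minimal left ideal of $(\beta\mathbb{N},+)$; the only algebraic input I will use about such a $p$ is the standard fact that, for every $C\in p$, the set $C^{\star}=\{x\in C:-x+C\in p\}$ again belongs to $p$, and that $-x+C^{\star}\in p$ for each $x\in C^{\star}$. (Every partial sum produced below lies in $A\subseteq\mathbb{N}$, so all these translates are unambiguous.)

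First I would fix such a $p$ with $A\in p$ and construct $\langle a_n\rangle_{n=1}^{\infty}$ in $\mathbb{N}$ and $\langle H_n\rangle_{n=1}^{\infty}$ in $\mathcal{P}_f(\mathbb{N})$ by induction on $n$, keeping the two invariants: (i) $\max H_n<\min H_{n+1}$; and (ii) for every nonempty $F\subseteq\{1,\dots,n\}$ and every $i\le k$,
\[
\sigma(i,F):=\sum_{m\in F}\Bigl(a_m+\sum_{t\in H_m}y_{i,t}\Bigr)\in A^{\star}.
\]
Granting (i) and (ii) for all $n$, conclusion (2) follows on applying (ii) with $n=\max F$ (since $A^{\star}\subseteq A$), and (1) is just (i). For the inductive step --- the case $n=0$ giving the base case, with the convention $\max H_{0}=0$ --- suppose $a_1,H_1,\dots,a_n,H_n$ have been chosen and set
\[
B:=A^{\star}\cap\bigcap\bigl\{\,-\sigma(i,F)+A^{\star}:\emptyset\neq F\subseteq\{1,\dots,n\},\ i\le k\,\bigr\}.
\]
By (ii) and the recursion property stated above, $B$ is a finite intersection of members of $p$, hence $B\in p$. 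It then suffices to find $a_{n+1}\in\mathbb{N}$ and $H_{n+1}\in\mathcal{P}_f(\mathbb{N})$ with $\min H_{n+1}>\max H_n$ and $a_{n+1}+\sum_{t\in H_{n+1}}y_{i,t}\in B$ for all $i\le k$; for then, writing any $F\ni n+1$ as $F=G\cup\{n+1\}$ with $G\subseteq\{1,\dots,n\}$, we get $\sigma(i,F)=\sigma(i,G)+\bigl(a_{n+1}+\sum_{t\in H_{n+1}}y_{i,t}\bigr)$, which lies in $A^{\star}$ --- either because $G=\emptyset$ and the second summand lies in $B\subseteq A^{\star}$, or because $\sigma(i,G)\in A^{\star}$ and the second summand lies in $B\subseteq-\sigma(i,G)+A^{\star}$ --- and together with the induction hypothesis for the $F$ not containing $n+1$ this re-establishes (i) and (ii) at stage $n+1$.

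The whole difficulty sits in that last step: exhibiting a single pair $(a_{n+1},H_{n+1})$, with $H_{n+1}$ beyond $\max H_n$, that simultaneously routes all $k$ translated block-sums into the one prescribed set $B\in p$. This is exactly where the algebra of $\beta\mathbb{N}$ is needed essentially, and where piecewise syndeticity of $B$ together with van der Waerden's theorem does not suffice, since the sequences $\langle y_{i,t}\rangle$ can grow so fast that no arithmetic progression inside $B$ absorbs their partial sums. The fact I would invoke --- itself a substantial result about the algebra of $\beta\mathbb{N}$ --- is that every central set is a $J$-set: since $B\in p$ with $p$ a minimal idempotent, $B$ is central, so for any functions $g_1,\dots,g_k:\mathbb{N}\to\mathbb{N}$ and any $M\in\mathbb{N}$ there exist $a\in\mathbb{N}$ and $H\in\mathcal{P}_f(\mathbb{N})$ with $\min H>M$ and $a+\sum_{t\in H}g_i(t)\in B$ for every $i$. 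Applying this with $g_i(t)=y_{i,t}+L$, where $L$ is a fixed constant making every $y_{i,t}+L$ positive, and then taking $H_{n+1}=H$ and $a_{n+1}=a+|H|\,L$, absorbs the (harmless) shift and yields the required pair. I expect establishing or properly invoking this $J$-set property to be the genuine obstacle; everything around it is routine bookkeeping with the $C^{\star}$-recursion. (In the setting of this paper one could alternatively read the statement off the filter version of the Central Sets Theorem proved below by specialising the filter, thereby sidestepping the $J$-set discussion entirely.)
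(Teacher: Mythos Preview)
The paper does not prove this theorem; it simply writes ``See \cite{key-28}'', pointing to Furstenberg's original dynamical argument. Your sketch instead follows the now-standard algebraic route via the $C^{\star}$-recursion in $\beta\mathbb{N}$, and the overall architecture --- reduce the inductive step to landing all $k$ block-sums in a single $B\in p$, then invoke that members of minimal idempotents are $J$-sets --- is sound and is essentially the skeleton the paper itself uses in proving Theorem~\ref{main}.

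There is, however, one genuine slip in your last paragraph. You propose to shift the $\mathbb{Z}$-valued sequences into $\mathbb{N}$ via $g_i(t)=y_{i,t}+L$, ``where $L$ is a fixed constant making every $y_{i,t}+L$ positive''. No such constant need exist: the sequences $\langle y_{i,t}\rangle$ are arbitrary in $\mathbb{Z}$ and may be unbounded below (for instance $y_{1,t}=-t$). The repair is to let the shift depend on $t$: set $L_t=1+\max_{i\le k}|y_{i,t}|$ and $g_i(t)=y_{i,t}+L_t\in\mathbb{N}$, apply the $J$-set property of $B$ to $g_1,\dots,g_k$ to obtain $a\in\mathbb{N}$ and $H$ with $\min H>\max H_n$, and then take $a_{n+1}=a+\sum_{t\in H}L_t$ rather than $a+|H|\cdot L$. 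With that change the argument goes through. Your closing remark that one could instead specialise the filter in Theorem~\ref{main} to $\mathcal{F}=\{\mathbb{N}\}$ is correct in spirit, but note that this too yields the conclusion only for $\mathbb{N}$-valued sequences, so the same shift device is still needed to recover the statement for sequences in $\mathbb{Z}$.
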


\begin{proof}
See \cite{key-28}.
\end{proof}
This theorem did point out some immediate consequences towards the
study of large sets and their abundance in Ramsey theory.

We now give a brief review about the Stone-\v{C}ech compactification
of a discrete semigroup. Let $\left(S,\cdot\right)$ be any discrete
semigroup and denote its Stone-\v{C}ech compactification by $\beta S$.
$\beta S$ is the set of all ultrafilters on $S$, where the points
of $S$ are identified with the principal ultrafilters. The basis
for the topology is $\left\{ \bar{A}:A\subseteq S\right\} $, where
$\bar{A}=\left\{ p\in\beta S:A\in p\right\} $. The operation of $S$
can be extended to $\beta S$ making $\left(\beta S,\cdot\right)$
a compact, right topological semigroup with $S$ contained in its
topological center. That is, for all $p\in\beta S$, the function
$\rho_{p}:\beta S\rightarrow\beta S$ is continuous, where $\rho_{p}\left(q\right)=q\cdot p$
and for all $x\in S$, the function $\lambda_{x}:\beta S\rightarrow\beta S$
is continuous, where $\lambda_{x}\left(q\right)=x\cdot q$. For $p,q\in\beta S,$
and $A\subseteq S$, $A\in p\cdot q$ if and only if $\left\{ x\in S:x^{-1}A\in q\right\} \in p$,
where $x^{-1}A=\left\{ y\in S:x\cdot y\in A\right\} $. 

Since $\beta S$ is a compact Hausdorff right topological semigroup,
it has a smallest two sided ideal denoted by $K\left(\beta S\right)$,
which is the union of all of the minimal right ideals of $S$, as
well as the union of all of the minimal left ideals of $S$. Every
left ideal of $\beta S$ contains a minimal left ideal, and every right
ideal of $\beta S$ contains a minimal right ideal. The intersection
of any minimal left ideal and any minimal right ideal is a group,
and any two such groups are isomorphic. Any idempotent $p$ in $\beta S$
is said to be minimal if and only if $p\in K\left(\beta S\right)$.
Though central sets was defined dynamically, there is an algebraic
counterpart of this definition, established by V. Bergelson and
N. Hindman in \cite{key-6}.
\begin{definition}
Let $S$ be a discrete semigroup. Then a subset $A$ of $S$ is called
central if and only if there is some minimal idempotent $p$ such
that $A\in p$.
\end{definition}

Due to the above characterization of Central sets, one can use the
algebra of the Stone-\v{C}ech compactification of a discrete semigroup.
Thereafter many versions of Central Sets Theorem came, among which
the following one is the general one for any arbitrary semigroup established in \cite{key-100}.
\begin{theorem}
Let $\left(S,\cdot\right)$ be a semigroup and let $C$ be a central
subset of $S$. There exist $m:\mathcal{P}_{f}\left(^{\mathbb{N}}S\right)\rightarrow\mathbb{N}$,
$\alpha\in\times_{F\in\mathcal{P}_{f}\left(^{\mathbb{N}}S\right)}S^{m(F)+1}$,
and $\tau\in\times_{F\in\mathcal{P}_{f}\left(^{\mathbb{N}}S\right)}\mathcal{J}_{m(F)}$
such that 

(1) If $F,G\in\mathcal{P}_{f}\left(^{\mathbb{N}}S\right)$ and $G\subsetneq F$,
then $\tau\left(G\right)\left(m\left(G\right)\right)$ $<$ $\tau\left(F\right)\left(1\right)$
and

(2) Whenever $n\in\mathbb{N}$, $G_{1},G_{2},\ldots,G_{n}\in\mathcal{P}_{f}\left(^{\mathbb{N}}S\right)$,
$G_{1}\subsetneq G_{2}\subsetneq\ldots\subsetneq G_{n}$, and for
each $i\in\left\{ 1,2,\cdots,n\right\} $, $f_{i}\in G_{i}$, one
has 
\[
\prod_{i=1}^{n}\left(\left(\prod_{j=1}^{m(G_{i})}\alpha\left(G_{i}\right)\left(j\right)\cdot f_{i}\left(\tau\left(G_{i}\right)\left(j\right)\right)\right)\cdot\alpha\left(G_{i}\right)\left(m\left(G_{i}\right)+1\right)\right)\in A.
\]
\end{theorem}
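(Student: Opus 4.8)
The plan is to reduce the statement, through the algebraic description of central sets, to one Ramsey-theoretic input about ``$J$-sets,'' and then to extract the full configuration by a routine induction inside $\beta S$. Since $C$ is central, fix a minimal idempotent $p\in\beta S$ with $C\in p$ and set $C^{\star}=\{x\in C:x^{-1}C\in p\}$; by the standard properties of idempotent ultrafilters, $C^{\star}\in p$ and $x^{-1}C^{\star}\in p$ for each $x\in C^{\star}$, so that $x\cdot(x^{-1}C^{\star})\subseteq C^{\star}$. I would then define $m(F)$, $\alpha(F)$ and $\tau(F)$ for $F\in\mathcal{P}_{f}({}^{\mathbb{N}}S)$ by induction on $|F|$, proving along the way the strengthening of $(2)$ in which every displayed product lies in $C^{\star}$ rather than merely in $C$.

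The only external ingredient is that \emph{every member of a minimal idempotent of $\beta S$ is a $J$-set}, where $A\subseteq S$ is a $J$-set if for every $F\in\mathcal{P}_{f}({}^{\mathbb{N}}S)$ there are $m\in\mathbb{N}$, $a\in S^{m+1}$ and $\tau\in\mathcal{J}_{m}$ with $\left(\prod_{j=1}^{m}a(j)\cdot f(\tau(j))\right)\cdot a(m+1)\in A$ for \emph{all} $f\in F$. (Members of a minimal idempotent are piecewise syndetic, and piecewise syndetic sets are $J$-sets; see \cite{key-100}.) Replacing each $f\in F$ by $t\mapsto f(l+t)$ shows moreover that, for any prescribed threshold $l$, the witnessing $\tau$ can be taken with $\tau(1)>l$.

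For the inductive step, suppose the data have been defined on every proper subset of $F$. There are only finitely many chains $G_{1}\subsetneq\dots\subsetneq G_{k}\subsetneq F$ of proper subsets of $F$ and, for each, finitely many choices $f_{i}\in G_{i}$; by the induction hypothesis every resulting ``partial product'' $x$ lies in $C^{\star}$. Let $E$ be the finite set of these $x$ and put $B=C^{\star}\cap\bigcap_{x\in E}x^{-1}C^{\star}$; then $B\in p$ (and $B=C^{\star}$ when $|F|=1$). Since $B$ belongs to the minimal idempotent $p$, it is a $J$-set; applying the $J$-set property to $F$ with threshold $l=\max\left(\{0\}\cup\{\tau(G)(m(G)):G\subsetneq F\}\right)$ yields $m(F)$, $\alpha(F)$, $\tau(F)$ with $\tau(F)(1)>l$ such that the ``$F$-block'' $\left(\prod_{j=1}^{m(F)}\alpha(F)(j)\cdot f(\tau(F)(j))\right)\cdot\alpha(F)(m(F)+1)$ lies in $B$ for every $f\in F$. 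Condition $(1)$ is then immediate from the choice of $l$, and the strengthened $(2)$ follows because any product occurring in $(2)$ whose chain has greatest element $F$ is either the $F$-block itself (one-element chain) or of the form $x\cdot(F\text{-block})$ with $x\in E$, and $x\cdot(F\text{-block})\in x\cdot B\subseteq x\cdot(x^{-1}C^{\star})\subseteq C^{\star}$; products whose chains have greatest element a proper subset of $F$ were handled earlier in the induction.

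I expect the genuine difficulty to lie entirely in the cited fact that piecewise syndetic sets are $J$-sets: its proof is a delicate, essentially self-contained combinatorial argument — an iterated pigeonhole of van der Waerden type — and that is where all the real Ramsey-theoretic content is concentrated. By contrast, the passage above from the $J$-set property to the stated configuration is organised bookkeeping in the algebra of $\beta S$, the only points requiring care being that $\mathcal{J}_{m}$ consists of strictly increasing tuples (which is what allows condition $(1)$ to be arranged across successive blocks) together with the routine manipulations with $C^{\star}$.
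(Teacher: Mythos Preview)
Your proof is correct and is precisely the standard argument. Note, however, that the paper does not actually prove this theorem: its ``proof'' consists solely of the citation ``See \cite{key-100}'' (De--Hindman--Strauss, 2008), where this result originally appeared. Your sketch reproduces the argument of that reference faithfully --- the $C^{\star}$ trick, the reduction to the $J$-set property of members of minimal idempotents, the shift $f\mapsto f(l+\cdot)$ to force $\tau(F)(1)$ above a prescribed threshold, and the induction on $|F|$ over finite chains of proper subsets --- and it is also exactly the template the present paper follows in Section~2 when proving its own main result, the $\mathcal{F}$-Central Sets Theorem (Theorem~\ref{main}), with Lemmas~\ref{J} and~\ref{JJ} playing the roles of ``piecewise syndetic $\Rightarrow$ $J$-set'' and the threshold-shifting step respectively. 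So there is nothing to compare: you have supplied what the paper merely cites.
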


\begin{proof}
See \cite{key-100}.
\end{proof}
For a simpler version one can see \cite{key-3}, a general version one can see \cite{key-101} and for more details on Central Sets Theorem till date, see \cite{key-2}. For the sake
of our work we will need to revisit some important definitions arising
from the algebra of the Stone-\v{C}ech compactification of a discrete
semigroup, for more details see \cite{key-53}.

\begin{definition}
If $\left(S,\cdot\right)$ be a semigroup and $A\subseteq S$, then

  \begin{enumerate}
\item The set $A$ is thick if and only if for any finite subset $F$ of
$S$, there exists an element $x\in S$ such that $F\cdot x\subset A$.
This means the sets which contain a translation of any finite subset.
For example one can see $\cup_{n\in\mathbb{N}}\left[2^{n},2^{n}+n\right]$
is a thick set in $\mathbb{N}$.
\item The set $A$ is syndetic if and only if there exists a finite subset
$G$ of $S$ such that $\bigcup_{t\in G}t^{-1}A=S$. That is, with
a finite translation if, the set covers the entire semigroup,
then it will be called a Syndetic set. For example the set of even
and odd numbers are both syndetic in $\mathbb{N}$.

Note that $Gt\cap A\neq \emptyset$ if and only if $t\in G^{-1}A$. So $Gt\cap A\neq \emptyset$ if and only if $t\in S=G^{-1}A$. Hence the syndetic sets are dual to thick.
\item The sets which can be written as an intersection of a syndetic and
a thick set are called $\mathit{Piecewise}$ $\mathit{syndetic}$
sets. More formally a set $A$ is $\mathit{Piecewise}$ $\mathit{syndetic}$
if and only if there exists $G\in\mathcal{P}_{f}\left(S\right)$ such
that for every $F\in\mathcal{P}_{f}\left(S\right)$, there exists
$x\in S$ such that $F\cdot x\subseteq\bigcup_{t\in G}t^{-1}A$. Clearly
the thick sets and syndetic sets are natural examples of $\mathit{Piecewise}$
$\mathit{syndetic}$ sets. From definition one can immediately see
that $2\mathbb{N}\cap\bigcup_{n\in\mathbb{N}}\left[2^{n},2^{n}+n\right]$
is a nontrivial example of $\mathit{Piecewise}$ $\mathit{syndetic}$
sets in $\mathbb{N}$.
\item We define  $\mathcal{T}=\,^{\mathbb{N}}S$.
\item For $m\in\mathbb{N}$, $\mathcal{J}_{m}=\left\{ \left(t\left(1\right),\ldots,t\left(m\right)\right)\in\mathbb{N}^{m}:t\left(1\right)<\ldots<t\left(m\right)\right\} .$
\item Given $m\in\mathbb{N}$, $a\in S^{m+1}$, $t\in\mathcal{J}_{m}$ and
$f\in F$, 
\[
x\left(m,a,t,f\right)=\left(\prod_{j=1}^{m}\left(a\left(j\right)\cdot f\left(t\left(j\right)\right)\right)\right)\cdot a\left(m+1\right),
\]
where the terms in the product $\prod$ are arranged in increasing
order.
\item $A\subseteq S$ is called a $J$-set iff for each $F\in\mathcal{P}_{f}\left(\mathcal{T}\right)$,
there exists $m\in\mathbb{N}$, $a\in S^{m+1}$, $t\in\mathcal{J}_{m}$
such that, for each $f\in\mathcal{T}$,
\[
x\left(m,a,t,f\right)\in A.
\]
\item If the semigroup $S$ is commutative, the definition is rather simple.
In that case, a set $A\subseteq S$ is a $J$-set if and only if whenever
$F\in\mathcal{P}_{f}\left(^{\mathbb{N}}S\right)$, there exist $a\in S$
and $H\in\mathcal{P}_{f}\left(\mathbb{N}\right)$, such that for each
$f\in F$, $a+\sum_{t\in H}f(t)\in A$.
   \end{enumerate}
\end{definition}   
IP-sets play a major role in Ramsey theory. The importance
of this set was first shown by N. Hindman in \cite{key-35}. Let us
recall the following definitions around IP-sets.
\begin{definition}
If we are given any injective sequence $\langle x_{n}\rangle_{n=1}^{\infty}$
in $S$, then,

  \begin{enumerate}
\item A set $A$ which contains $FP\left(\langle x_{n}\rangle_{n=1}^{\infty}\right)$
for some injective sequence $\langle x_{n}\rangle_{n=1}^{\infty}$
in $S$, is called an IP set, where 
\[
FP\left(\langle x_{n}\rangle_{n=1}^{\infty}\right)=\left\{ x_{i_{1}}\cdot x_{i_{2}}\cdot\cdots\cdot x_{i_{n}}:\left\{ i_{1}<i_{2}<\cdots<i_{n}\right\} \subseteq\mathbb{N}\right\} .
\]
\item For any $r\in\mathbb{N}$, a set $A$ which contains $FP\left(\langle x_{n}\rangle_{n=1}^{r}\right)$
for some injective sequence $\langle x_{n}\rangle_{n=1}^{\infty}$
in $S$, is called an IP$_{r}$ set, where 
\[
FP\left(\langle x_{n}\rangle_{n=1}^{r}\right)=\left\{ x_{i_{1}}\cdot x_{i_{2}}\cdot\cdots\cdot x_{i_{n}}:\left\{ i_{1}<i_{2}<\cdots<i_{n}\right\} \subseteq\left\{ 1,2,\ldots,r\right\} \right\} .
\]
  \end{enumerate}
\end{definition}  
An easy algebraic characterization shows that a set $A$ is an IP-set,
if and only if $A\in p$, for some idempotent $p\in\beta S$. A set
which intersects every IP-set is called an IP$^{*}$ set. And hence a
set $A$ is called an IP$^{*}$set if and only if $A\in p$ for every
idempotent $p\in\beta S$. A set which intersects every IP$_{r}$
set is called an IP$_{r}^{*}$ set. Clearly an IP$_{r}^{*}$ set is
an IP$^{*}$-set, but the converse is not true.

We now discuss a different notion. Throughout this paper, $\mathcal{F}$
will denote a filter of $\left(S,\cdot\right)$. For every filter
$\mathcal{F}$ of $S$, define $\bar{\mathcal{F}}\subseteq\beta S$,
by
\[
\bar{\mathcal{F}}=\bigcap_{V\in\mathcal{F}}\bar{V}.
\]
 It is a routine check that $\bar{\mathcal{F}}$ is a closed subset
of $\beta S$ consisting of ultrafilters which contain $\mathcal{F}$.
If $\mathcal{F}$ is an idempotent filter, i.e., $\mathcal{F}\subset\mathcal{F}\cdot\mathcal{F}$,
then $\bar{\mathcal{F}}$ becomes a closed subsemigroup of $\beta S$,
but the converse is not true. Throughout our article, we will consider
only those filters $\mathcal{F}$, for which $\bar{\mathcal{F}}$
is a closed subsemigroup of $\beta S$. Note that if we take the filter
$\mathcal{F}=\left\{ S\right\} $, then $\bar{\mathcal{F}}=\beta S$.

In light of this notion we can define the concept of piecewise $\mathcal{F}$-syndeticity
both combinatorially and algebraically. For details see \cite{key-4}.
\begin{definition}
Let $\mathcal{F}$ be a filter on $S$. A subset $A\subseteq S$ is
piecewise $\mathcal{F}$-syndetic if for every $V\in\mathcal{F}$,
there is a finite $F_{V}\subseteq V$ and $W_{V}\in\mathcal{F}$ such
that whenever $H\subseteq W_{V}$ a finite subset, there is $y\in V$
such that $H\cdot y\subseteq F_{V}^{-1}A$.
\end{definition}

Here is an algebraic characterization of piecewise $\mathcal{F}$-syndetic
sets.
\begin{theorem}
Let $T$ be a closed subsemigroup of $\beta S$, and $\mathcal{F}$ be
the filter on $S$ such that $T=\bar{\mathcal{F}}$, also let $A\subseteq S$.
Then $\bar{A}\cap K\left(T\right)\neq\emptyset$ (Here $K(T)$ is the smallest two sided ideal in T) if and only if $A$
is piecewise $\mathcal{F}$-syndetic.
\end{theorem}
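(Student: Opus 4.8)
The plan is to transpose the classical Hindman--Strauss characterisation of piecewise syndetic sets (the case $\mathcal F=\{S\}$, $T=\beta S$) to the closed subsemigroup $T=\bar{\mathcal F}$, letting the filter $\mathcal F$ play the part previously played by $S$ itself. The first step is a bridge lemma: a set $E\subseteq S$ belongs to \emph{every} $r\in T$ precisely when $E\in\mathcal F$ (because $T=\{r\in\beta S:\mathcal F\subseteq r\}$ and $\mathcal F$ is a filter), and consequently, for $B\subseteq S$ and $q\in T$, one has $Tq\subseteq\bar B$ if and only if $\{x\in S:x^{-1}B\in q\}\in\mathcal F$. This is the surrogate, internal to $T$, for the identity $\beta Sq=\overline{Sq}$ that the classical proof uses freely but which is not available here, since $S$ need not meet $T$. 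I would also record at the outset that, $\mathcal F$ being an idempotent filter, $\{x\in S:x^{-1}V\in\mathcal F\}\in\mathcal F$ for each $V\in\mathcal F$; this is exactly what will keep the one--sided translates encountered below inside $T$.

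For the implication ``piecewise $\mathcal F$-syndetic $\Rightarrow\bar A\cap K(T)\neq\emptyset$'', fix the witnessing data $F_V\subseteq V$ and $W_V\in\mathcal F$ from the definition and monotonise it routinely (enlarging $F_V$, shrinking $W_V$, and taking $W_V\subseteq V$ as $V$ shrinks in $\mathcal F$). A finite--intersection argument then yields an ultrafilter $q\in T$ that extends $\mathcal F$ and contains every set $\{y\in V:Hy\subseteq F_V^{-1}A\}$; the crucial point that makes the finite intersection property work is that a witness $y$ can always be pushed into an arbitrarily small member of $\mathcal F$ simply by applying the definition of piecewise $\mathcal F$-syndeticity to a smaller $V$. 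By the bridge lemma, $Tq\subseteq\overline{F_V^{-1}A}$ for the relevant $V$, so this closure contains the left ideal $Tq$ of $T$ and hence a minimal left ideal $L\subseteq K(T)$. Taking $r\in L$, the membership $F_V^{-1}A\in r$ forces $t^{-1}A\in r$ for some $t\in F_V$, so $A\in tr$; choosing $t$ inside a suitable member of $\mathcal F$ and invoking idempotence of $\mathcal F$ shows $tr\in T$, and one verifies that in fact $tr\in K(T)$, giving $tr\in\bar A\cap K(T)$.

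For the converse, let $p\in\bar A\cap K(T)$. Then $p$ lies in a minimal left ideal $L$ and a minimal right ideal $R$ of $T$, and the group $R\cap L$ has idempotent identity $e\in K(T)$ with $ep=p$; hence $C:=\{x\in S:x^{-1}A\in p\}$ belongs to $e$. The task is then to read off the combinatorial conclusion: using that $e$ is a minimal idempotent of $T$ together with the bridge lemma, one shows that $C$ is ``$\mathcal F$-syndetic'' — for every $V\in\mathcal F$ there is a finite $F_V\subseteq V$ with a suitable covering property — and then, combining a finite translate of $C$ with the thickness supplied by the minimal left ideal $L$ and unwinding $x\in C\Rightarrow x^{-1}A\in p$, one produces for each $V\in\mathcal F$ the required finite $F_V\subseteq V$, the member $W_V\in\mathcal F$, and witnesses $y\in V$ with $Hy\subseteq F_V^{-1}A$, i.e.\ $A$ is piecewise $\mathcal F$-syndetic.

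The main obstacle is the same in both directions and is structural rather than computational: the definition of piecewise $\mathcal F$-syndeticity is indexed by $V\in\mathcal F$ with both the finite set $F_V$ and the member $W_V$ allowed to depend on $V$, so one must funnel this $V$-dependent data through a single ultrafilter and a single ideal of $T$; and at several junctures one must check that a one--sided translate $tr$, $te$, etc.\ remains inside $T$, which is precisely the place where the standing hypothesis that $\mathcal F$ is an idempotent filter (equivalently, that $T=\bar{\mathcal F}$ is a subsemigroup) is indispensable. Once the bridge lemma and the monotonisation of the witnessing data are in place, everything else is a careful but faithful relativisation of the classical argument.
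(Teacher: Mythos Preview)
The paper does not actually prove this theorem; it cites it from Shuungula--Zelenyuk--Zelenyuk \cite[Theorem 2.3]{key-4}. So there is no in-paper argument to compare against, and your sketch is supplying what the authors import wholesale.

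Your overall plan --- relativise the classical characterisation via the ``bridge lemma'' $E\in\mathcal F\Leftrightarrow T\subseteq\bar E$, build $q\in T$ by a finite-intersection argument, and pass to a minimal left ideal $L\subseteq Tq$ --- is sound and is the natural route. Two points, however, need repair. First, the parenthetical ``$\mathcal F$ idempotent $\Leftrightarrow\bar{\mathcal F}$ a subsemigroup'' is false; the paper explicitly remarks that the converse fails, so idempotence of $\mathcal F$ is not available as a standing hypothesis. Second, and this is the real gap, the final step ``$A\in tr$ for some $t\in F_V$, and then $tr\in K(T)$'' does not go through. For a principal $t\in S$ there is no mechanism forcing $tr\in T$, and your proposed fix via idempotence both rests on the unavailable hypothesis and would not suffice even if it held: you would need $t^{-1}V'\in r$ for \emph{every} $V'\in\mathcal F$, whereas $t$ was chosen only inside the single $F_V\subseteq V$. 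The clean repair avoids principal translates altogether: once you know $F_V^{-1}A\in r$ for every $V\in\mathcal F$ and some fixed $r\in L$, set $C=\{x\in S:x^{-1}A\in r\}$; then $C$ meets every $F_V$, hence every $V\in\mathcal F$, so $\mathcal F\cup\{C\}$ has the finite intersection property and extends to some $s\in T$ with $C\in s$, giving $A\in sr$ and $sr\in Tr=L\subseteq K(T)$. The same caution about principal translates applies to your converse direction, which is in any case too telegraphic to check line by line.
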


\begin{proof}
See \cite[Theorem 2.3]{key-4}.
\end{proof}
Our primary concern in this paper is to state and prove the new Central
Sets Theorem along filters. For that, we need to raise some prior
definitions and concepts.
\begin{definition}
Let $\left(S,\cdot\right)$ be a semigroup and $\mathcal{F}$ be a
filter on $S$. Then
  \begin{enumerate}
\item For any $l\in\mathbb{N}$, and any $l$ sequences $\langle x_{n}^{(i)}\rangle_{n=1}^{\infty}$
for $i\in\left\{ 1,2,\cdots,l\right\} $, define the zigzag finite
product 
\[
ZFP\left(\langle x_{n}^{(i)}\rangle_{i,n=1,1}^{l,\infty}\right)=\left\{ \begin{array}{c}
\prod_{t\in H}y_{t}:H\in\mathcal{P}_{f}\left(\mathbb{N}\right)\,\text{and}\\
y_{i}\in\left\{ x_{i}^{(1)},x_{i}^{(2)},\cdots,x_{i}^{(l)}\right\} \,\text{for\,any}\,i\in\mathbb{N}
\end{array}\right\} .
\]
\item for any $k\in \mathbb{N}$, define
\[
ZFP_k\left(\langle x_{n}^{(i)}\rangle_{i,n=1,1}^{l,\infty}\right)=\left\{ \begin{array}{c}
\prod_{t\in H}y_{t}:H\in\mathcal{P}_{f}\left(\mathbb{N}\right)\, \min H \geq k\, \text{and}\\
y_{i}\in\left\{ x_{i}^{(1)},x_{i}^{(2)},\cdots,x_{i}^{(l)}\right\} \,\text{for\,any}\,i\in\mathbb{N}
\end{array}\right\} .
\]
\item Let $G\in\mathcal{P}_{f}\left(^{\mathbb{N}}S\right)$, we will call
$G$ is $\mathcal{F}$-good if for
any $F\in\mathcal{F}$, there exists $k=k(F)\in \mathbb{N}$ such that  $ZFP_k\left(G\right)\subseteq F$ .
\item A set $B\subseteq S$ will be called a $\mathcal{F}$-$J$ set, if
for any finite collection of $\mathcal{F}$ good maps, say $F\in\mathcal{P}_{f}\left(^{\mathbb{N}}S\right)$,
there exist $a_{1},a_{2},\ldots,a_{m+1}\in S$ and $\left\{ h_{1},h_{2},\cdots,h_{m}\right\} \subset\mathbb{N}$
such that 
\[
a_{1}f\left(h_{1}\right)a_{2}f\left(h_{2}\right)\ldots a_{m}f\left(h_{m}\right)a_{m+1}\in B.
\]
\item $\mathcal{P}_{f}^{\mathcal{F}}\left(^{\mathbb{N}}S\right)=\left\{
F\in\mathcal{P}_{f}\left(^{\mathbb{N}}S\right): F \,\text{\,is}\,\mathcal{F}\,\text{\,good}\right\} .$
\item A set $A\subseteq S$ is called $\mathcal{F}$-central if and only
if there exists an idempotent ultrafilter $p\in K\left(\bar{\mathcal{F}}\right)$
such that $A\in p$.
   \end{enumerate}
\end{definition}

\noindent For every filter, one may not get $\mathcal{F}$-good maps. But such kind of maps exist in many filters. We postpone this until observation \ref{obs}. Now we can state our desired Central Sets Theorem along filters.
\begin{theorem}
\label{main} Let $\left(S,\cdot\right)$ be a semigroup and $\mathcal{F}$ be
a filter on $S$ such that $\bar{\mathcal{F}}$ is a semigroup. Let
$A$ be a $\mathcal{F}$ central set on $S$. Then there exist functions
$m:\mathcal{P}_{f}^{\mathcal{F}}\left(^{\mathbb{N}}S\right)\rightarrow\mathbb{N}$,
$\alpha\in\times_{F\in\mathcal{P}_{f}^{\mathcal{F}}\left(^{\mathbb{N}}S\right)}S^{m(F)+1}$
and $\tau\in\times_{F\in\mathcal{P}_{f}^{\mathcal{F}}\left(^{\mathbb{N}}S\right)}\mathcal{J}_{m(F)}$
such that

  \begin{enumerate}
\item if $F,G\in\mathcal{P}_{f}^{\mathcal{F}}\left(^{\mathbb{N}}S\right)$
and $G\subsetneq F$, then $\tau\left(G\right)\left(m\left(G\right)\right)$ $<$
$\tau\left(F\right)\left(1\right)$ and
\item when $n\in\mathbb{N}$, $G_{1},G_{2},\ldots,G_{n}\in\mathcal{P}_{f}^{\mathcal{F}}\left(^{\mathbb{N}}S\right)$,
$G_{1}\subsetneq G_{2}\subsetneq\ldots\subsetneq G_{n}$, and for
each $i\in\left\{ 1,2,\cdots,n\right\} $, $f_{i}\in G_{i}$, one
has 
\[
\prod_{i=1}^{n}x\left(m\left(G_{i}\right),\alpha\left(G_{i}\right),\tau\left(G_{i}\right),f_{i}\right)\in A.
\]
  \end{enumerate}
\end{theorem}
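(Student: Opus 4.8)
The plan is to imitate the standard proof of the Central Sets Theorem (as in \cite{key-100,key-3}) but carried out inside the closed subsemigroup $\bar{\mathcal{F}}$ rather than all of $\beta S$, using $\mathcal{F}$-good maps and $\mathcal{F}$-$J$ sets in place of ordinary $\mathcal{T}$-maps and $J$-sets. First I would fix a minimal idempotent $p\in K(\bar{\mathcal{F}})$ with $A\in p$, which exists because $A$ is $\mathcal{F}$-central. The first real task is to establish the analogue of the two foundational facts about $J$-sets: (i) every member of a minimal idempotent of $\bar{\mathcal{F}}$ is an $\mathcal{F}$-$J$ set — equivalently, $\{q\in\bar{\mathcal{F}} : \text{every } B\in q \text{ is an } \mathcal{F}\text{-}J\text{ set}\}$ is a closed (two-sided, or at least left) ideal of $\bar{\mathcal{F}}$, hence contains $K(\bar{\mathcal{F}})$; and (ii) a Ramsey-type partition regularity statement for $\mathcal{F}$-$J$ sets: if an $\mathcal{F}$-$J$ set is finitely partitioned, at least one cell is again an $\mathcal{F}$-$J$ set. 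For (ii) one argues by contradiction exactly as in the classical case: if $B=\bigcup_{i=1}^r B_i$ and no $B_i$ is an $\mathcal{F}$-$J$ set, pick for each $i$ a finite family $F_i$ of $\mathcal{F}$-good maps witnessing failure; take $F=\bigcup F_i$ (still a finite family of $\mathcal{F}$-good maps, since being $\mathcal{F}$-good is closed under finite unions — this needs a one-line check that the $ZFP_k$ condition is preserved), and run the usual pigeonhole over the finitely many choices of $(m,a,t)$ for $F$ to derive a contradiction.

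The key difference from the classical argument is that all the maps involved must stay $\mathcal{F}$-good, so I would next verify the closure properties needed: that finite unions of $\mathcal{F}$-good families are $\mathcal{F}$-good, and — crucially for the inductive construction below — that whenever $f$ is $\mathcal{F}$-good and $\sigma:\mathbb{N}\to\mathbb{N}$ is an increasing function with $\sigma(n)\ge n$, the shifted/restricted map $n\mapsto f(\sigma(n))$ is again $\mathcal{F}$-good. This is precisely what makes it legitimate to pass to tails of sequences inside the main recursion, and it is the technical point on which the whole adaptation hinges. Granting this, the equality $\bar{\mathcal{F}}\cdot p\subseteq\bar{\mathcal{F}}$ and the fact that $p$ is minimal in $\bar{\mathcal{F}}$ give, for each $B\in p$, that $B^\star=\{x\in S : x^{-1}B\in p\}\in p$ and that $B$ is an $\mathcal{F}$-$J$ set, just as in the classical development.

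With these tools in hand, the construction of $m$, $\alpha$, $\tau$ proceeds by recursion on $\mathcal{P}_f^{\mathcal{F}}(^{\mathbb{N}}S)$ ordered by inclusion, exactly mirroring the proof in \cite{key-100}. At stage $F$, having already defined everything on the finitely many proper subsets $G\subsetneq F$ that are $\mathcal{F}$-good, one lets $B$ be the (guaranteed nonempty) intersection of $A$ with all the sets $\big(\prod_{i=1}^n x(m(G_i),\alpha(G_i),\tau(G_i),f_i)\big)^{-1}A^\star$ over chains $G_1\subsetneq\cdots\subsetneq G_n\subsetneq F$ and choices $f_i\in G_i$; this $B$ lies in $p$, hence is an $\mathcal{F}$-$J$ set, so applying the $\mathcal{F}$-$J$ property to the family $F$ itself — after composing with a shift that forces $\tau(F)(1)>\tau(G)(m(G))$ for every relevant $G$, which is where the shift-closure of $\mathcal{F}$-goodness is used — yields $m(F),\alpha(F),\tau(F)$ satisfying both (1) and the required membership. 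The verification that condition (2) then holds for an arbitrary chain $G_1\subsetneq\cdots\subsetneq G_n$ is the usual telescoping induction on $n$ using $B\subseteq A$ and $x^{-1}A^\star\ni$ the next product. The main obstacle, and the only place genuinely new work is needed, is confirming that $\mathcal{F}$-goodness survives the two operations above (finite union and composition with a shift); once that lemma is in place, the rest is a faithful transcription of the classical argument with $\beta S$ replaced by $\bar{\mathcal{F}}$, $J$-sets by $\mathcal{F}$-$J$ sets, and $\mathcal{P}_f(^{\mathbb{N}}S)$ by $\mathcal{P}_f^{\mathcal{F}}(^{\mathbb{N}}S)$.
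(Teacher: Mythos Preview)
Your recursion on $|F|$, the use of $A^\star=\{x:x^{-1}A\in p\}$, and the shift step you call ``shift-closure of $\mathcal{F}$-goodness'' all match the paper (the shift is Lemma~\ref{JJ}). Where you diverge is in how you obtain the key input that every $B\in p$ is an $\mathcal{F}$-$J$ set. The paper does not go through partition regularity or the ideal $\{q\in\bar{\mathcal{F}}:\text{every member of }q\text{ is }\mathcal{F}\text{-}J\}$ at all. Instead it uses the characterization from \cite{key-4} that $B\in p$ with $p\in K(\bar{\mathcal{F}})$ forces $B$ to be piecewise $\mathcal{F}$-syndetic, and then proves directly via Hales--Jewett that every piecewise $\mathcal{F}$-syndetic set is an $\mathcal{F}$-$J$ set (Lemma~\ref{J}). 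This is shorter and bypasses the ideal machinery entirely.

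Your proposed route has a real gap in the partition regularity sketch. Taking $F=\bigcup_i F_i$ and appealing to ``the usual pigeonhole over the finitely many choices of $(m,a,t)$'' does not work: a single witness $(m,a,t)$ for $F$ only puts each $x(m,a,t,f)$ into \emph{some} cell $B_i$, not all $f\in F_j$ into $B_j$ simultaneously, and there are not finitely many $(m,a,t)$. The classical partition regularity of $J$-sets already needs Hales--Jewett (see \cite{key-100} or \cite[Chapter~14]{key-53}), so you are not avoiding it. Two further points you would need to address on your route: closure of $\mathcal{F}$-goodness under finite unions is not the one-line check you claim, since $ZFP_k(F_1\cup F_2)$ contains mixed products not lying in $ZFP_k(F_1)\cup ZFP_k(F_2)$; and for your ideal to be nonempty inside $\bar{\mathcal{F}}$ you need every $V\in\mathcal{F}$ to be an $\mathcal{F}$-$J$ set, which you do not verify. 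The paper's route through piecewise $\mathcal{F}$-syndeticity avoids all three issues.
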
  
In a recent work \cite{key-7}, the authors have characterized the closed subsemigroups and ideals of $\beta \mathcal{F}$. It would be interesting to see whether the above result can be improved in the way of \cite{key-7}.\\
\noindent {\bf Organisation of the paper:} The orientation of our paper is the following. In the next section,
we present the proof of the $\mathcal{F}$-Central Set Theorem. Then
in section 3, we deal with preservation of largeness of some well
known sets in combinatorics. The proofs in that section are both combinatorial
as well as algebraic. In this section, we will also provide examples of $\mathcal{F}$-good maps. In section 4, we provide some applications,
mainly from an algebraic and combinatorial point of view, of the previous theorems.

\section{The $\mathcal{F}$-Central Sets Theorem}
To proceed further let us recall the Hales-Jewett Theorem. Conventionally $[t]$ denotes the set $\{1,2,\ldots,t\}$ and words of length $N$ over the alphabet $[t]$ are the elements of $[t]^{N}.$ A variable word is a word over $[t]\cup\{*\}$ in where $*$ occurs at least once and $*$ denotes the variable. A combinatorial line is denoted by $L_{\tau}=\{\tau(1),\tau(2),\ldots,\tau(t)\}$ where
$\tau(*)$ is a variable word and $L_{\tau}$ is obtained by replacing the variable $*$ by $1,2,\ldots.t.$ The following theorem is due to Hales-Jewett.
\begin{theorem}[\cite{key-6-1}]
For all values $t,r\in\mathbb{N}$, there exists a number $HJ(r,t)$ such that, if $N\geq HJ(r,t)$ and $[t]^{N}$ is $r$ colored then there will exist a monochromatic combinatorial line.
\end{theorem}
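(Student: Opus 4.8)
The plan is to establish this by induction on the alphabet size $t$, following Shelah's argument, which is a self-contained combinatorial induction (and incidentally yields primitive recursive bounds). The base case $t=1$ is immediate: one may take $HJ(r,1)=1$, since $[1]^{N}$ is a single point and the all-variable word $*\,*\cdots*$ has that point as its combinatorial line, which is trivially monochromatic. For the inductive step, suppose $HJ(\rho,t-1)$ has been defined for every $\rho\in\mathbb{N}$, and set $M=HJ(r,t-1)$. I will produce $N=n_{1}+n_{2}+\cdots+n_{M}$, with each block length $n_{i}$ chosen astronomically large relative to the blocks processed after it, and show that this $N$ may be taken as $HJ(r,t)$. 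Given an $r$-coloring $\chi$ of $[t]^{N}$, viewed as $[t]^{n_{1}}\times\cdots\times[t]^{n_{M}}$, the idea is to collapse each block to a single letter of the smaller alphabet $[t-1]$ and then apply the inductive hypothesis on $[t-1]^{M}$.

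The collapse is carried out block by block, in a fixed order. When block $i$ is reached, consider the chain of $n_{i}+1$ words $v_{0},v_{1},\dots,v_{n_{i}}\in[t]^{n_{i}}$, where $v_{k}$ has its first $k$ entries equal to $t-1$ and its remaining entries equal to $t$. Color the index $k$ by the function that records, for every ``context'' (every choice of entries in the other blocks), the $\chi$-color of the point whose $i$-th block is $v_{k}$; the number of colors so used is $r$ raised to the number of contexts, so if $n_{i}$ exceeds this quantity the pigeonhole principle produces $k_{1}<k_{2}$ with equal color. The variable word $w_{i}$ that has $t-1$ in its first $k_{1}$ coordinates, $*$ in coordinates $k_{1}+1,\dots,k_{2}$, and $t$ thereafter then satisfies, in every context, $\chi(w_{i}(t))=\chi(w_{i}(t-1))$, because $w_{i}(t)=v_{k_{1}}$ and $w_{i}(t-1)=v_{k_{2}}$. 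Doing this for every $i$ yields variable words $w_{1},\dots,w_{M}$ for which the induced coloring $c(a_{1},\dots,a_{M})=\chi\bigl(w_{1}(a_{1}),\dots,w_{M}(a_{M})\bigr)$ of $[t]^{M}$ is unchanged when any coordinate value $t$ is switched to $t-1$; hence $c$ factors through an $r$-coloring of $[t-1]^{M}$. As $M=HJ(r,t-1)$, that coloring admits a monochromatic combinatorial line, realized by a variable word $\sigma$ over $[t-1]\cup\{*\}$. Substituting into the $i$-th coordinate of $\sigma$ the word $w_{i}(a)$ wherever $\sigma$ has the letter $a$, and the variable word $w_{i}$ wherever $\sigma$ has $*$, glues everything into one variable word over $[t]\cup\{*\}$ of length $N$ (it really does contain $*$, since $\sigma$ does and each $w_{i}$ does). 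Its combinatorial line is monochromatic for $\chi$: the points for letters $1,\dots,t-1$ share a color because $\sigma$ is monochromatic for $c$, and the point for letter $t$ joins them precisely by the $t$-versus-$(t-1)$ insensitivity built into each $w_{i}$.

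The place where genuine care is needed is the dependence on the order of quantifiers in the block collapse. When block $i$ is processed, its ``contexts'' are all possible entries of the other blocks --- those already collapsed contributing one of $t$ letters, those not yet collapsed contributing a whole word of their own length --- so the number of contexts, and hence the required $n_{i}$, grows as a tower of exponentials whose height is the number of blocks $M=HJ(r,t-1)$. Pinning this recursion down precisely, checking that it terminates with a finite $N$, and verifying that the glued word is a legitimate variable word whose line is monochromatic is the bulk of the argument; the rest is bookkeeping. That recursion, together with keeping the ``context'' counts honest, is the step I expect to be the main obstacle.
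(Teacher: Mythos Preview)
The paper does not give a proof of this statement at all: it is quoted as the classical Hales--Jewett theorem, attributed to \cite{key-6-1}, and used as a black box in the proof of Lemma~\ref{J}. So there is no ``paper's own proof'' to compare against.

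Your proposal is a correct outline of Shelah's proof of Hales--Jewett. The induction on $t$, the block decomposition $N=n_1+\cdots+n_M$ with $M=HJ(r,t-1)$, the pigeonhole step producing variable words $w_i$ with $\chi(w_i(t))=\chi(w_i(t-1))$ in every context, the passage to an induced $r$-coloring of $[t-1]^M$, and the lifting of the resulting monochromatic line back to $[t]^N$ are all standard and sound. Your caveat about the order in which the blocks are processed and the bookkeeping of context sizes is exactly the point where a full write-up needs care; in the usual presentation one fixes $n_M$ first and works backward, so that when block $i$ is treated the later blocks contribute only $t$ choices each while the earlier blocks contribute $t^{n_j}$ choices, giving $n_i > r^{\,t^{\,n_1+\cdots+n_{i-1}}\cdot t^{\,M-i}}$, which terminates in a tower of height $M$. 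With that recursion made explicit, the argument is complete.

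In short: your approach is genuinely different from the paper's only in the sense that you supply a proof where the paper supplies a citation; what you have written is the right proof to supply.
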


To prove the $\mathcal{F}$-Central Sets Theorem we will need some lemmas.
\begin{lemma} \label{J} 
Let $A$ be a piecewise $\mathcal{F}$-syndetic set and $F\in\mathcal{P}_{f}^{\mathcal{F}}\left(^{\mathbb{N}}S\right)$ be $\mathcal{F}$- good. Then there exist $a_{1},a_{2},\ldots,a_{\ell+1}\in S$ and $\left\{ h_{1},h_{2},\ldots,h_{\ell}\right\} \subset\mathbb{N}$ such that, 
\[
a_{1}f\left(h_{1}\right)a_{2}f\left(h_{2}\right)\ldots a_{\ell}f\left(h_{\ell}\right)a_{\ell+1}\in A
\]
 for all $f\in F$.
\end{lemma}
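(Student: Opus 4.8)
The plan is to establish the slightly stronger statement that every piecewise $\mathcal{F}$-syndetic set is an $\mathcal{F}$-$J$-set; in other words, to relativize to the filter $\mathcal{F}$ the classical argument that a piecewise syndetic set is a $J$-set, the engine of which is the Hales--Jewett theorem. Write $F=\{f_{1},\dots,f_{r}\}$ with $r=|F|\ge 1$. First I would apply the definition of piecewise $\mathcal{F}$-syndeticity of $A$ to the member $V=S\in\mathcal{F}$: this yields a finite (nonempty, enlarging it if need be) set $F_{0}\subseteq S$ together with some $W\in\mathcal{F}$ such that for every finite $H\subseteq W$ there is $y\in S$ with $H\cdot y\subseteq F_{0}^{-1}A$. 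Put $c=|F_{0}|$ and let $n=HJ(c,r)$ be the Hales--Jewett number for colourings of $[r]^{n}$ by $c$ colours. The hypothesis that $F$ is $\mathcal{F}$-good now enters exactly once: it provides $k\in\mathbb{N}$ with $ZFP_{k}(F)\subseteq W$, which is what will force the finitely many ``zigzag words'' built below to lie in $W$, so that piecewise $\mathcal{F}$-syndeticity applies to them.

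Next I would encode length-$n$ words over $[r]$ as elements of $S$, but \emph{interleaved with fixed separators}, so that no degeneracy can arise at the two ends of a word or between two consecutive variable coordinates. Concretely, use the $2n+1$ consecutive indices $k,k+1,\dots,k+2n$ and, for $w\in[r]^{n}$, let $\bar w$ be the word on these indices carrying the symbol $1$ at $k,k+2,\dots,k+2n$ and the symbol $w(i)$ at $k+2i-1$ for $i=1,\dots,n$; set $y(\bar w)=\prod_{j=0}^{2n}f_{\bar w(k+j)}(k+j)$, the product in increasing order of index. Each $y(\bar w)$ is a $ZFP_{k}$-word of $F$, hence lies in $W$, so $L=\{y(\bar w):w\in[r]^{n}\}$ is a finite subset of $W$; piecewise $\mathcal{F}$-syndeticity gives $y\in S$ with $L\cdot y\subseteq F_{0}^{-1}A$. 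For each $w$ choose $s_{w}\in F_{0}$ with $s_{w}\,y(\bar w)\,y\in A$; the map $w\mapsto s_{w}$ is a $c$-colouring of $[r]^{n}$, so by the choice of $n$ there is a monochromatic combinatorial line, namely a nonempty variable set $I=\{i_{1}<\dots<i_{m}\}\subseteq[n]$, constants $d_{i}\in[r]$ for $i\notin I$, and a single $s\in F_{0}$ so that all $r$ words $w_{v}$ obtained by substituting $v\in[r]$ into the variable coordinates get colour $s$.

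Then I would substitute into $y(\bar w_{v})$ and pull out the variable indices $q_{t}:=k+2i_{t}-1$ ($t=1,\dots,m$), which increase. The separator symbol $1$ sitting at index $k$ (to the left of $q_{1}$), at index $k+2i_{t}$ (strictly between $q_{t}$ and $q_{t+1}$), and at index $k+2n$ (to the right of $q_{m}$) ensures that in the resulting factorization $y(\bar w_{v})=C_{0}\,f_{v}(q_{1})\,C_{1}\,f_{v}(q_{2})\cdots C_{m-1}\,f_{v}(q_{m})\,C_{m}$ every block $C_{t}$ is a \emph{nonempty} product of elements of $S$, hence a genuine element of $S$ --- this is the sole purpose of the interleaving. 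Taking $\ell=m$, $h_{t}=q_{t}$, $a_{1}=sC_{0}$, $a_{t+1}=C_{t}$ for $1\le t\le\ell-1$, and $a_{\ell+1}=C_{\ell}y$, I obtain $a_{1}f_{v}(h_{1})a_{2}f_{v}(h_{2})\cdots a_{\ell}f_{v}(h_{\ell})a_{\ell+1}=s\,y(\bar w_{v})\,y\in A$ for every $v\in[r]$, and since $\{f_{v}:v\in[r]\}=F$ this is precisely the asserted conclusion.

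The step I expect to be the main obstacle is exactly the degeneracy addressed by the separators: the Hales--Jewett theorem gives no control over which coordinate set $I$ becomes variable, so a naive encoding over the indices $1,\dots,n$ could be forced to yield adjacent factors $f_{v}(j)f_{v}(j+1)$ with nothing between them, or a variable factor at the extreme left or right, none of which can be written as $a_{1}f(h_{1})\cdots a_{\ell+1}$ with all $a_{i}\in S$ when $S$ lacks an identity. The interleaving trick removes this at the price of roughly doubling the word length; the remaining points --- that $F_{0}$ may be taken nonempty, that each $C_{t}$ really is nonempty with the chosen parities, that the needed $y$ and $s_{w}$ exist, and that $L\subseteq W$ --- are routine.
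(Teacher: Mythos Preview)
Your argument is correct and follows the same route as the paper: apply piecewise $\mathcal{F}$-syndeticity to a single $V\in\mathcal{F}$ to obtain a finite translating set and a witness $W\in\mathcal{F}$, invoke $\mathcal{F}$-goodness of $F$ to push the zigzag words into $W$, then colour $[r]^{n}$ by which translate lands in $A$ and extract a monochromatic Hales--Jewett line. The paper carries out exactly this, with $V$ left unspecified and the encoding $(f_{1},\dots,f_{N})\mapsto f_{1}(k+1)\cdots f_{N}(k+N)$ taken without separators.

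The one genuine difference is your interleaving device. The paper simply asserts that a monochromatic line yields $a_{1},\dots,a_{\ell+1}\in S$; you notice that if the variable set of the line contains two adjacent coordinates, the corresponding intermediate $a_{j}$ would have to be an empty product, which is not available in an arbitrary semigroup without identity. Your fixed separator $f_{1}$ at the alternating positions $k,k+2,\dots,k+2n$ guarantees each block $C_{t}$ is nonempty, so every $a_{j}$ is an honest element of $S$. (The boundary cases $a_{1}$ and $a_{\ell+1}$ are already safe in the paper's version because of the prefix $t$ and suffix $y$, but adjacency in the interior is indeed glossed over there.) This refinement costs nothing structurally --- the products $y(\bar w)$ are still $ZFP_{k}$-words, so $L\subseteq W$ --- and it makes the argument airtight for general $S$. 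Otherwise the two proofs coincide.
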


\begin{proof}
Let $A$ be a piecewise $\mathcal{F}$- syndetic set. Let $V\in\mathcal{F},$ and then, by definition, there exists a finite set $F_{V}\subset V$ and $W_{F}\in\mathcal{F}$ such that, for all finite $H\subset W_{F}$, there exists $y\in V$ such that, $H\cdot y\subset F_{V}^{-1}A$. As $F$ is $\mathcal{F}$ good, choose $k=k(F)\in \mathbb{N}$. Let $r=\mid F_{V}\mid$. Consider the Hales-Jewett number $N=N\left(\vert F\vert,r\right)$. Then any $r$-partition of $F^{N}$gives a monochromatic combinatorial line. For any $\left(f_{1},f_{2},\ldots,f_{N}\right)\in F^{N}$, consider
the corresponding element 
\[
f_{1}\left(k+1\right)f_{2}\left(k+2\right)\cdots f_{N}\left(k+N\right).
\]

Let $H=\left\{ f_{1}\left(k+1\right)f_{2}\left(k+2\right)\cdots f_{N}\left(k+N\right):\left(f_{1},f_{2},\cdots,f_{N}\right)\in F^{N}\right\} \subseteq W_{V}$. Now consider each element $x$ in $H$ with the color $t\in F_{V}$ such that $txy\in A$, where $t$ is minimum. So this induces an $r$-coloring on $H$, and gives a monochromatic combinatorial line, i.e. there exists a finite set $H=\left\{ h_{1},h_{2},\cdots,h_{\ell}\right\} $ and elements $a_{1},a_{2},\cdots,a_{\ell+1}$ in $S$, such that 
\[
a_{1}f\left(h_{1}\right)a_{2}f\left(h_{2}\right)\ldots a_{\ell}f\left(h_{\ell}\right)a_{\ell+1}\in A
\]

for all $f\in F.$
\end{proof}
\noindent So, we have seen that every piecewise $\mathcal{F}$-syndetic set is $\mathcal{F}$-$J$ set. We will need an another lemma now.
\begin{lemma} \label{JJ}
Let $\left(S,\cdot\right)$ be a semigroup and $\mathcal{F}$
be a filter such that $\bar{\mathcal{F}}$ is a semigroup. Let $A$
be a $\mathcal{F}$-$J$ set and $F\in\mathcal{P}_{f}^{\mathcal{F}}\left(^{\mathbb{N}}S\right)$.
Then for all $m\in\mathbb{N}$, there exists $l\in\mathbb{N}$, such
that $a\in S^{l+1}$, $t\in\mathcal{J}_{l}$ and 
$$x\left(m,a,t,f\right)=a_{1}f\left(t\left(1\right)\right)a_{2}f\left(t\left(2\right)\right)\cdots a_{m}f\left(t\left(m\right)\right)a_{m+1}\in A$$
for all $f\in F$, where $t\left(1\right)>m$.
\end{lemma}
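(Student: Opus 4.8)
The plan is to reduce the statement to the defining property of an $\mathcal{F}$-$J$ set by a ``shift by $m$'' of the family $F$. Write $F=\{f_{1},\dots,f_{r}\}$. For each $f\in F$ define $\widehat{f}\colon\mathbb{N}\to S$ by $\widehat{f}(n)=f(n+m)$, and put $\widehat{F}=\{\widehat{f}:f\in F\}$. If I can show that $\widehat{F}$ is again $\mathcal{F}$-good, i.e. $\widehat{F}\in\mathcal{P}_{f}^{\mathcal{F}}({}^{\mathbb{N}}S)$, then applying the hypothesis that $A$ is an $\mathcal{F}$-$J$ set to the family $\widehat{F}$ yields $\ell\in\mathbb{N}$, elements $a_{1},\dots,a_{\ell+1}\in S$ and $h_{1}<h_{2}<\dots<h_{\ell}$ in $\mathbb{N}$ with
\[
a_{1}\widehat{f}(h_{1})a_{2}\widehat{f}(h_{2})\cdots a_{\ell}\widehat{f}(h_{\ell})a_{\ell+1}\in A\qquad\text{for every }f\in F .
\]
Setting $t(j)=h_{j}+m$ for $1\le j\le \ell$, we get $t=(t(1),\dots,t(\ell))\in\mathcal{J}_{\ell}$, $t(1)=h_{1}+m>m$, and $a=(a_{1},\dots,a_{\ell+1})\in S^{\ell+1}$; since $\widehat{f}(h_{j})=f(h_{j}+m)=f(t(j))$, the displayed product is exactly $x(\ell,a,t,f)$, which lies in $A$ for all $f\in F$. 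This is the asserted conclusion, with $\ell$ playing the role of the length index.

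The one point that requires an argument is that $\widehat{F}$ is $\mathcal{F}$-good. Here I would first record the obvious monotonicity $ZFP_{j}(F)\subseteq ZFP_{j'}(F)$ whenever $j\ge j'$ (a larger lower bound on $\min H$ is more restrictive), and then the inclusion $ZFP_{k}(\widehat{F})\subseteq ZFP_{k+m}(F)$ for every $k\in\mathbb{N}$. The latter holds because a zigzag product $\prod_{i\in H}y_{i}$ built from $\widehat{F}$, with $\min H\ge k$ and $y_{i}\in\{\widehat{f}_{1}(i),\dots,\widehat{f}_{r}(i)\}$, is sent by the order isomorphism $i\mapsto i+m$ of $H$ onto $H+m=\{i+m:i\in H\}$ to the same element of $S$, now read as a zigzag product over $F$ on the index set $H+m$, where $\min(H+m)\ge k+m$ and at each $i''\in H+m$ the chosen factor lies in $\{f_{1}(i''),\dots,f_{r}(i'')\}$. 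Granting these two facts: given $V\in\mathcal{F}$, the $\mathcal{F}$-goodness of $F$ supplies $k_{0}$ with $ZFP_{k_{0}}(F)\subseteq V$, and then with $k:=\max\{1,k_{0}-m\}$ one obtains $ZFP_{k}(\widehat{F})\subseteq ZFP_{k+m}(F)\subseteq ZFP_{k_{0}}(F)\subseteq V$, so $k(V):=k$ witnesses the $\mathcal{F}$-goodness of $\widehat{F}$.

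I expect this verification of the $\mathcal{F}$-goodness of the shifted family to be the only genuine (and still short) obstacle; everything else is bookkeeping with the definition of $x(\cdot,\cdot,\cdot,\cdot)$ and of $\mathcal{F}$-$J$ sets. I also note that the standing hypothesis that $\bar{\mathcal{F}}$ is a semigroup is not actually invoked in this particular lemma — it is carried along only because it is the running assumption of the section and will be needed in the proof of Theorem~\ref{main}.
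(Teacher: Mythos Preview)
Your proof is correct and follows essentially the same route as the paper's: define the shifted family $\widehat{F}$ (the paper calls it $F_m$ with $g_f^m(x)=f(x+m)$), apply the $\mathcal{F}$-$J$ property to $\widehat{F}$, and then translate the resulting indices back by $m$. Your write-up is in fact more complete, since the paper simply asserts that ``as $F$ is $\mathcal{F}$-good, $F_m$ is also $\mathcal{F}$-good'' without the $ZFP_k(\widehat{F})\subseteq ZFP_{k+m}(F)$ verification you supply; your closing remark that the semigroup hypothesis on $\bar{\mathcal{F}}$ is not actually used here is also accurate.
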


\begin{proof}
Take the function $g_{f}^{m}\left(x\right)=f\left(x+m\right)$ for all $x\in\mathbb{N}$. Let us consider $F_{m}=\left\{ g_{f}^{m}:f\in F\right\} .$ As $F$ is $\mathcal{F}$ good, $F_{m}$ is also $\mathcal{F}$-good.  So, choose $l\in\mathbb{N}$, $a\in S^{l+1},t\in\mathcal{J}_{l}$ such that for all $g_{f}^{m}\in F_{m}$ , $x\left(l,a,t,g_{f}^{m}\right)\in A$. Now this implies $x\left(l,a,t+m,f\right)\in A$ for all $f\in F$ , where $\left(t+m\right)\left(i\right)=t\left(i\right)+m$ for $i\in\left\{ 1,2,\cdots,l\right\} $ , and so $t+m\in\mathcal{J}_{l}$.  So rewriting $t+m=t'$ we have $x\left(l,a,t',f\right)\in A$ for all $f\in F$ , where $t'\left(1\right)>m$. 
So the lemma is proved.
\end{proof}
\noindent We are now in a position to prove $\mathcal{F}$-central set Theorem.

\vspace{0.15in}

\begin{proof}[\large{Proof of theorem \ref{main}}]
$\mathcal{F}$ be a filter on $S$ such that $\bar{\mathcal{F}}$ is a semigroup. Assume that $A$ be a $\mathcal{F}$-central set. Then there exists a minimal idempotent of $\bar{\mathcal{F}}$ say,  $p\in K\left(\bar{\mathcal{F}}\right)$ such that $A\in p$. As $p\cdot p=p$, so $A^{*}=\left\{ x:x^{-1}A\in p\right\} \in p$.
Now define $m\left(F\right)\in\mathbb{N}$, $\alpha\left(F\right)\in S^{m(F)+1}$
and $\tau\left(F\right)\in\mathcal{J}_{m(F)}$, by induction on $\mid F\mid$,
satisfying the following inductive hypothesis:
   \begin{enumerate}   
\item if $\emptyset\neq G\subset F$, then $\tau\left(G\right)\left(m\left(G\right)\right)<\tau\left(F\right)\left(1\right)$

\item if $n\in\mathbb{N}$, $\emptyset\neq G_{1}\subsetneq G_{2}\subsetneq\ldots\subsetneq G_{n}=F$
and $\langle f_{i}\rangle_{i=1}^{n}\in\times_{i=1}^{n}G_{i}$ then
    \end{enumerate}
\[
\prod_{i=1}^{n}x\left(m\left(G_{i}\right),\alpha\left(G_{i}\right),\tau\left(G_{i}\right),f_{i}\right)\in A^{*}.
\]

Assume first that $F=\left\{ f\right\} $. As $A^{*}$ is piecewise
$\mathcal{F}$- syndetic, pick $m\in\mathbb{N}$, $a\in S^{m+1}$,
$t\in\mathcal{J}_{m}$ such that $x\left(m,a,t,f\right)\in A^{*}$.
So the hypothesis is satisfied vacuously. Now assume $\mid F\mid>1$ and $m\left(G\right),\alpha\left(G\right),\tau\left(G\right)$
have been defined for all proper subsets $G$ of $F$.  Let $K=\left\{ \tau\left(G\right):\emptyset\neq G\subsetneq F\right\} $,
$m=\max_{\emptyset\neq G\subsetneq F}\tau\left(G\right)\left(m\left(G\right)\right)$.
Let 
\[
M=\left\{ \begin{array}{c}
\prod_{i=1}^{n}x\left(m\left(G_{i}\right),\alpha\left(G_{i}\right),\tau\left(G_{i}\right),f_{i}\right):n\in\mathbb{N},\emptyset\neq G_{1}\subsetneq\ldots\subsetneq G_{n}\subsetneq F,\\
\,\qquad\text{and}\,\langle f_{i}\rangle_{i=1}^{n}\in\times_{i=1}^{n}G_{i}
\end{array}\right\}. 
\]
Then $M$ is finite and $M\subset A^{*}$ by induction hypothesis.
Let 
\[
B=A^{*}\cap\bigcap_{z\in M}z^{-1}A^{*}.
\]
 Then $B\in p$ and by lemma \ref{J} and \ref{JJ} there exist $N\in\mathbb{N}$,
$a\in S^{N+1}$, $t\in\mathcal{J}_{N}$ and $t\left(1\right)>m$ such
that $x\left(N,a,t,f\right)\in B$ for all $f\in F$. 

We write $m\left(F\right)=N$, $\alpha\left(F\right)=a$, $\tau\left(F\right)=t$.
Then clearly $\tau\left(F\right)\left(1\right)=t\left(1\right)>\tau\left(G\right)\left(m\left(G\right)\right)$
for all $\emptyset\neq G\subsetneq F$. Denote $G_{n+1}=F$, so the first hypothesis of the induction is satisfied. 
Now let $\langle f_{i}\rangle_{i=1}^{n+1}\in\times_{i=1}^{n+1}G_{i}$.
Then $\prod_{i=1}^{n}x\left(m\left(G_{i}\right),\alpha\left(G_{i}\right),\tau\left(G_{i}\right),f_{i}\right)\in M$.
 So, from the above choice of $N$, $a$ and $t$ we have, 
 $$x\left(N,a,t,f_{n+1}\right)=x\left(m\left(F\right),\alpha\left(F\right),\tau\left(F\right),f_{n+1}\right)$$\\
$$\hspace{1.7in}\in \left(\prod_{i=1}^{n}x\left(m\left(G_{i}\right),\alpha\left(G_{i}\right),\tau\left(G_{i}\right),f_{i}\right)\right)^{-1}A^{*},$$
i.e. $\prod_{i=1}^{n+1}x\left(m\left(G_{i}\right),\alpha\left(G_{i}\right),\tau\left(G_{i}\right),f_{i}\right)\in A^{*}$.
Hence the second induction hypothesis is satisfied. So, we have the
desired Central Set Theorem along filters.
\end{proof}

\section{Preserving large sets}

In \cite{key-54.01}, N. Hindman and D. Strauss proved the following equivalent statements for a commutative group.
\begin{theorem}
Let $\left(S,+\right)$ be a commutative group and let $H$ be a subgroup
of $S$. Then the following statements are equivalent.
    \begin{enumerate}       
\item $H$ is IP{*} in $S.$

\item $H$ is central{*} in $S.$

\item $H$ is central in $S.$

\item $H$ is piecewise syndetic in $S.$
     \end{enumerate}
\end{theorem}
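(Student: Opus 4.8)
The plan is to prove the chain of implications $(1)\Rightarrow(2)\Rightarrow(3)\Rightarrow(4)\Rightarrow(1)$, most of which are either immediate or follow from elementary facts about the algebra of $\beta S$. First, $(1)\Rightarrow(2)$ is trivial from the definitions: an IP$^*$ set meets every IP set, and since every central set is in particular an IP set (it belongs to an idempotent, which is an idempotent ultrafilter), an IP$^*$ set meets every central set, hence is central$^*$. Next $(2)\Rightarrow(3)$: a central$^*$ set meets every central set, and since $S$ itself is trivially central (there is at least one minimal idempotent in $\beta S$, and every set in it is central — here we just need that $K(\beta S)\neq\emptyset$), a central$^*$ set is nonempty and, more to the point, since $H$ being central$^*$ means $H\in p$ for every minimal idempotent $p$, in particular $H\in p$ for some minimal idempotent, so $H$ is central. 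Then $(3)\Rightarrow(4)$ is standard: every central set is piecewise syndetic, because a minimal idempotent lies in $K(\beta S)$, so $\bar H$ meets $K(\beta S)$, and by the combinatorial characterization (the $\mathcal{F}=\{S\}$ case of the theorem quoted above, or the classical result in Hindman–Strauss) this is equivalent to piecewise syndeticity.

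The substantive implication is $(4)\Rightarrow(1)$, and this is where the group structure and the fact that $H$ is a \emph{subgroup} (not merely a subset) is essential. Here I would argue as follows. Suppose $H$ is piecewise syndetic in $S$; I want to show $H$ is IP$^*$, i.e. $H$ meets $FS(\langle x_n\rangle_{n=1}^\infty)$ for every sequence $\langle x_n\rangle$. The key structural input is that a piecewise syndetic subgroup must have finite index: if $[S:H]$ were infinite, one could choose representatives $g_1, g_2, \ldots$ of infinitely many distinct cosets and observe that the translates $g_i + H$ are pairwise disjoint, which one can play off against the defining property of piecewise syndeticity (a single finite translation set $G$ must "cover" arbitrarily long thick blocks) to derive a contradiction — essentially, a set that omits infinitely many cosets is too "sparse" in the relevant sense to be piecewise syndetic. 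Actually the cleanest route: piecewise syndetic implies that for some finite $G$, $G^{-1}H$ is thick, hence a subgroup of finite index argument — or more directly, one uses that syndetic sets have bounded gaps and a subgroup that is piecewise syndetic is in fact syndetic (a subgroup, being "translation-invariant" under its own elements, cannot be merely piecewise syndetic without being syndetic), and a syndetic subgroup has finite index.

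Once $H$ has finite index, say $[S:H] = k$, the IP$^*$ property follows from a pigeonhole/Hindman-type argument: given any sequence $\langle x_n\rangle_{n=1}^\infty$, consider the partial sums or rather the cosets $x_1 + H, x_1+x_2+H, \ldots$ of the finite products; among the partial sums $s_j = x_{i_1}+\cdots+x_{i_j}$ there are only $k$ possible cosets, so by pigeonhole two partial sums $s_j$ and $s_{j'}$ with $j<j'$ lie in the same coset, whence $s_{j'} - s_j = x_{i_{j+1}} + \cdots + x_{i_{j'}} \in H$, and this is an element of $FS(\langle x_n\rangle)$. (This is exactly where we use that $S$ is a \emph{group} — we need to subtract.) Therefore $H$ meets every $FS$-set and is IP$^*$. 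The main obstacle, and the step deserving the most care, is showing that a piecewise syndetic subgroup has finite index; everything else is routine bookkeeping with the algebra of $\beta S$ and pigeonhole.
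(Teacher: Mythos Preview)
Your chain $(1)\Rightarrow(2)\Rightarrow(3)\Rightarrow(4)$ is fine; these are standard facts about the algebra of $\beta S$, and you have them right. The paper itself does not prove this theorem at all --- it simply cites \cite[Theorem 1.16]{key-54.01}, noting that the argument there is ``purely algebraic in nature'' (it works inside $\beta S$, roughly along the lines of the paper's later Theorem~\ref{fip}). So your approach is genuinely different from what the paper invokes.

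Where your write-up is weak is exactly the step you flag: the claim that a piecewise syndetic subgroup has finite index. The justification you offer (``a subgroup, being translation-invariant under its own elements, cannot be merely piecewise syndetic without being syndetic'') is not an argument. What actually works is this: piecewise syndeticity gives a finite $G$ such that $-G+H$ is thick, but $-G+H$ is a union of at most $|G|$ cosets of $H$; if $[S:H]$ were infinite, pick $|G|+1$ elements $b_0,\ldots,b_{|G|}$ in pairwise distinct cosets and take $F=\{b_0,\ldots,b_{|G|}\}$ --- for any $x$, two of the $b_i+x$ would land in the same coset by pigeonhole, giving $b_i-b_j\in H$, a contradiction. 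Once $[S:H]<\infty$, your partial-sums pigeonhole finishes $(4)\Rightarrow(1)$ correctly.

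It is worth noting that the paper's own combinatorial contribution, the very next theorem (Theorem~\ref{pwipr}), proves the sharper conclusion that $H$ is $\mathrm{IP}_r^*$ and does so \emph{without} the finite-index detour: it runs pigeonhole directly on the colouring of the partial sums $\sum_{i=1}^n x_i$ by the element $t\in G$ witnessing $t+\sum_{i=1}^n x_i + y\in H$. That argument is shorter, yields a quantitative bound $r=|G|+1$, and even works when $S$ is only a commutative semigroup with identity (your finite-index route needs $S$ to be a group to make sense of cosets). So your strategy is correct but slightly roundabout compared to what the paper does immediately afterward.
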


\begin{proof}
\cite[Theorem 1.16]{key-54.01}.
\end{proof}
The above proofs were purely algebraic in nature. Here we will show that, those groups are unexpectedly rich. Our proof will be combinatorial in nature.
\begin{theorem}\label{pwipr}
Let $(S,+)$ be a commutative semigroup containing the identity element
and let $G$ be a subgroup of $S$ which is a piecewise syndetic set
in $S$. Then there exists $r\in\mathbb{N}$ such that $G$ is $IP_{r}^{*}$
in $S$.
\end{theorem}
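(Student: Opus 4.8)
The plan is to pass to the quotient monoid $S/G$, prove that it is a \emph{finite group}, and then finish with an elementary zero-sum pigeonhole argument. Since $G$ is a subgroup of $S$ (hence contains the identity $0$), I would first check that $s\sim s'\iff s\in s'+G$ is an equivalence relation on $S$ that is compatible with $+$: reflexivity is immediate, symmetry uses closure of $G$ under inverses, and transitivity together with compatibility use closure of $G$ under $+$. Thus $Q:=S/{\sim}$ is a commutative monoid, the canonical map $\pi:S\to Q$ is a surjective homomorphism, and $\pi^{-1}(e)=G$, where $e:=\pi(0)$ is the identity of $Q$.

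The next step is to show that $Q$ is a finite group. I would first argue that the singleton $\{e\}$ is piecewise syndetic in $Q$: if $F_{0}\in\mathcal P_{f}(S)$ witnesses that $G$ is piecewise syndetic in $S$, then using the identity $F_{0}^{-1}G=\pi^{-1}\big((\pi(F_{0}))^{-1}\{e\}\big)$ and the surjectivity of $\pi$, for every finite $F'\subseteq Q$ one obtains some $y\in Q$ with $F'+y\subseteq(\pi(F_{0}))^{-1}\{e\}$, so $\pi(F_{0})$ witnesses piecewise syndeticity of $\{e\}$. The point is now that $(\pi(F_{0}))^{-1}\{e\}$ is a \emph{finite} set, because in a commutative monoid the relation $t+u=e$ forces $u$ to be the (unique) inverse of $t$, so this set has at most $|F_{0}|$ elements. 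Being piecewise syndetic, the singleton $\{e\}$ is in particular such that this finite set is thick; hence every element of $Q$, after a suitable translation, lies inside it and is therefore invertible, so $Q$ is a group. In a group every translation is a bijection, so a thick set that is finite forces the whole group to be finite; write $d:=|Q|=[S:G]$.

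To conclude that $G$ is $IP_{d}^{*}$, let $\langle x_{n}\rangle_{n=1}^{\infty}$ be any injective sequence in $S$; it suffices to produce a nonempty $H\subseteq\{1,\dots,d\}$ with $\sum_{n\in H}x_{n}\in G$, i.e. $\sum_{n\in H}\pi(x_{n})=e$ in $Q$. Setting $g_{n}=\pi(x_{n})$ and considering the $d+1$ partial sums $s_{0}=e$ and $s_{k}=g_{1}+\dots+g_{k}$ for $1\le k\le d$ inside the $d$-element group $Q$, the pigeonhole principle gives $s_{i}=s_{j}$ for some $0\le i<j\le d$, whence $g_{i+1}+\dots+g_{j}=e$ and $H=\{i+1,\dots,j\}$ works. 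Thus $G$ meets $FP\big(\langle x_{n}\rangle_{n=1}^{d}\big)$ for every injective sequence, so $G$ is $IP_{d}^{*}$ with $d=[S:G]$.

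The step I expect to be the main obstacle is proving that $Q$ is a group: since $S$ is only assumed to be a monoid there is no cancellation available, so one cannot deduce finiteness of $Q$ directly from the existence of a thick finite set — one must first extract invertibility of every coset out of the thick finite set $(\pi(F_{0}))^{-1}\{e\}$, and one must verify carefully that the coset relation and the quotient operation are genuinely well defined. By contrast, the concluding zero-sum pigeonhole argument is entirely routine.
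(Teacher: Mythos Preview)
Your proof is correct, and it takes a genuinely different route from the paper's. The paper argues directly: from piecewise syndeticity pick a finite $H$ with $\bigcup_{t\in H}(-t+G)$ thick, set $r=|H|+1$, translate the $r$ initial partial sums $\sum_{i=1}^{n}x_{i}$ by a single $y$ into $\bigcup_{t\in H}(-t+G)$, colour each $n$ by the element $t_{n}\in H$ used, and pigeonhole on this colour; when $t_{m}=t_{n}$ one subtracts inside $G$ to get $\sum_{i=m+1}^{n}x_{i}\in G$. Your argument instead extracts a structural fact first --- that $Q=S/G$ is a finite group --- and then runs the standard zero-sum pigeonhole in $Q$. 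The paper's route is shorter and avoids checking that the coset relation is a congruence, but your route yields strictly more: it shows that the hypothesis forces $S$ itself to be a group (every $s$ has an inverse modulo $G$, and $G$ supplies the rest), and it identifies the optimal constant $r=[S:G]$ rather than $|H|+1$. The one place to be careful is exactly where you flagged it: deducing that $Q$ is a group uses only that $(\pi(F_{0}))^{-1}\{e\}$ is thick and that inverses in a commutative monoid are unique, and finiteness then follows because translations in a group are bijections; your sketch of this step is correct but terse, so in a write-up it would be worth spelling out that for each $q\in Q$ thickness applied to $\{q\}$ produces $x$ with $q+x\in(\pi(F_{0}))^{-1}\{e\}$, whence $q$ is invertible.
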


\begin{proof}
Since $G$ is piecewise syndetic, we can pick $H\in\mathcal{P}_{f}(S)$,
such that for every $F\in\mathcal{P}_{f}(S)$ there exists $y\in S$
such that $F+y\subseteq\bigcup_{t\in H}\left(-t+G\right)$. Let $r=\mid H\mid+1,$
and let a sequence $\langle x_{n}\rangle_{n=1}^{r}$in $S$ be given.
Let $F=\{\Sigma_{i=1}^{n}x_{i}:n\in\{1,2,\ldots,r\}\}$. We can pick
$y\in S$, such that $F+y\subseteq\bigcup_{t\in H}\left(-t+G\right)$.
For $n\in\left\{ 1,2,\ldots,r\right\} $, we pick $t_{n}\in H$ such
that $t_{n}+\Sigma_{i=1}^{n}x_{i}+y\in G$. By our choice of $r$
we can pick $m<n$ in $\left\{ 1,2,\ldots,r\right\} $ such that $t_{m}=t_{n}$. 
So we have $-\left(t_{m}+\Sigma_{i=1}^{m}x_{i}+y\right)\in G$ which
in turn implies,\\ 

$\hspace{0.5in}\quad -\left(t_{m}+\Sigma_{i=1}^{m}x_{i}+y\right)+\left(t_{n}+\Sigma_{i=1}^{n}x_{i}+y\right)$\\ 

$\hspace{0.5in} =-\left(t_{m}+\Sigma_{i=1}^{m}x_{i}+y\right)+\left(t_{m}+\Sigma_{i=1}^{m}x_{i}+y\right)+\Sigma_{i=m+1}^{n}x_{i} $\\

$\hspace{0.5in}=\sum_{i=m+1}^{n}x_{i}\in G.$

\noindent This implies $G$ is $IP_{r}^{*}$ in $S$.
\end{proof}
This result is also true for arbitrary group , but we have to slightly
modify the assumption.
\begin{theorem}
\label{non } Let $(S,\cdot)$ be an arbitrary group and let $G$ be a normal subgroup of $S$ which
is a piecewise syndetic set in $S$. Then there exists $r\in\mathbb{N}$
such that $G$ is $IP_{r}^{*}$ set in $S$.
\end{theorem}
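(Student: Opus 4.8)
The plan is to run the proof of Theorem~\ref{pwipr} almost verbatim, but in multiplicative notation, replacing partial sums by partial products, and to invoke normality of $G$ at the very end to absorb a conjugating factor that non-commutativity forces upon us. So I would start by using piecewise syndeticity of $G$ to fix a finite $H\subseteq S$ such that for every $F\in\mathcal{P}_{f}(S)$ there is $y\in S$ with $F\cdot y\subseteq\bigcup_{t\in H}t^{-1}G$, and set $r=|H|+1$. The claim will be that this $r$ works.

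Given an arbitrary sequence $\langle x_{n}\rangle_{n=1}^{r}$ in $S$, it suffices to exhibit one element of $FP\bigl(\langle x_{n}\rangle_{n=1}^{r}\bigr)$ lying in $G$. I would form the partial products $p_{n}=x_{1}x_{2}\cdots x_{n}$ for $n\in\{1,\dots,r\}$, put $F=\{p_{1},\dots,p_{r}\}$, and choose $y\in S$ with $F\cdot y\subseteq\bigcup_{t\in H}t^{-1}G$. Then for each $n$ there is $t_{n}\in H$ with $t_{n}p_{n}y\in G$; since $r=|H|+1$, the pigeonhole principle yields $m<n$ in $\{1,\dots,r\}$ with $t_{m}=t_{n}=:t$. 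Because $G$ is a subgroup of the group $S$, $(tp_{m}y)^{-1}(tp_{n}y)\in G$, and cancelling gives
\[
(tp_{m}y)^{-1}(tp_{n}y)=y^{-1}p_{m}^{-1}p_{n}y=y^{-1}x_{m+1}x_{m+2}\cdots x_{n}\,y\in G.
\]
Finally, normality of $G$ gives $x_{m+1}x_{m+2}\cdots x_{n}=y\bigl(y^{-1}x_{m+1}\cdots x_{n}y\bigr)y^{-1}\in yGy^{-1}=G$, and since $\{m+1,\dots,n\}\subseteq\{1,\dots,r\}$ this element lies in $FP\bigl(\langle x_{n}\rangle_{n=1}^{r}\bigr)$, which is what we need.

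The only genuinely new point compared with Theorem~\ref{pwipr} is the appearance of the conjugate $y^{-1}(\cdots)y$ instead of $\sum_{i=m+1}^{n}x_{i}$ itself: in the abelian case the $y$'s simply cancel, whereas here they do not, and discharging this conjugation is exactly where the hypothesis $G\trianglelefteq S$ enters. I expect no other obstacle — the choice of $r$, the switch from partial sums to partial products, and the group-inverse cancellation are all routine once the notation is fixed — so the normality step is the one subtlety worth emphasising in the write-up.
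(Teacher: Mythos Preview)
Your proof is correct and is essentially identical to the paper's own argument: the same choice $r=|H|+1$, the same partial products, the same pigeonhole collision $t_m=t_n$, the same cancellation yielding $y^{-1}x_{m+1}\cdots x_n\,y\in G$, and the same appeal to normality to conclude $x_{m+1}\cdots x_n\in yGy^{-1}=G$. There is nothing to add.
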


\begin{proof}
Since $G$ is piecewise syndetic, pick $H\in\mathcal{P}_{f}(S)$ such
that for every $F\in\mathcal{P}_{f}(S)$, there exists $y\in S$ such
that $F\cdot y\subseteq\bigcup_{t\in H}t^{-1}G$. Let $r=\mid H\mid+1$,
and let a sequence $\langle x_{n}\rangle_{n=1}^{r}$in $S$ be given.
Let $F=\{\Pi_{i=1}^{n}x_{i}:n\in\{1,2,\ldots,r\}\}$. We can pick
$y\in S$ such that $F\cdot y\subseteq\bigcup_{t\in H}t^{-1}G$. For
$n\in\{1,2,\ldots,r\}$, we pick $t_{n}\in H$ such that $t_{n}\cdot\Pi_{i=1}^{n}x_{i}\cdot y\in G$.
By the similar argument of choice of proper $r$ for $\mid H\mid$
many coloring, we can again pick $m<n$ in $\{1,2,\ldots,r\}$ such
that $t_{m}=t_{n}$. 

So we get $\left(t_{m}\cdot\Pi_{i=1}^{m}x_{i}\cdot y\right){}^{-1}\left(t_{n}\cdot\Pi_{i=1}^{n}x_{i}\cdot y\right)\in G,$ $\text{i.e.}\,y^{-1}\cdot(\Pi_{i=1}^{m}x_{i})^{-1}\cdot t_{m}^{-1}\cdot t_{n}.\Pi_{i=1}^{n}x_{i}\cdot y=y^{-1}\cdot(\Pi_{i=1}^{m}x_{i})^{-1}\Pi_{i=1}^{n}x_{i}\cdot y\in G.$  Hence $\,\Pi_{i=m+1}^{n}x_{i}\in y\cdot G\cdot y^{-1}\subseteq G$
(Since $G$ is a normal subgroup).

So $G$ is $IP_{r}^{*}$ set in $S$.
\end{proof}
\begin{theorem}\label{jip}
Let $(S,+)$ be a commutative semigroup containing the identity element
and $(G,+)$ is a subgroup of $(S,+)$. If $G$ is a $J$-set in
$S$ then it is $IP^{*}$ set in $S$.
\end{theorem}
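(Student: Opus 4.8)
\noindent The plan is to reduce the claim to the purely combinatorial meaning of $IP^{*}$ and then feed the $J$-set hypothesis a carefully chosen two-element family of sequences. Recall that a subset of $S$ is $IP^{*}$ exactly when it meets $FS\left(\langle x_{n}\rangle_{n=1}^{\infty}\right)=\left\{ \sum_{n\in H}x_{n}:H\in\mathcal{P}_{f}\left(\mathbb{N}\right)\right\}$ for every (injective) sequence $\langle x_{n}\rangle_{n=1}^{\infty}$ in $S$, since every such set of finite sums is itself an $IP$-set. So I fix an arbitrary sequence $\langle x_{n}\rangle_{n=1}^{\infty}$ in $S$ and must exhibit a finite nonempty $H\subseteq\mathbb{N}$ with $\sum_{n\in H}x_{n}\in G$.

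\noindent The core step is this. Let $0$ denote the identity of $S$, set $f_{1}=\langle x_{n}\rangle_{n=1}^{\infty}$, and let $f_{2}$ be the constant sequence $f_{2}(n)=0$ for all $n$; nothing in the definition of a $J$-set restricts the sequences, so $\{f_{1},f_{2}\}\in\mathcal{P}_{f}\left(^{\mathbb{N}}S\right)$ is a legitimate test family. Since $G$ is a $J$-set and $S$ is commutative, there are $a\in S$ and $H\in\mathcal{P}_{f}\left(\mathbb{N}\right)$ with $a+\sum_{n\in H}f_{i}(n)\in G$ for both $i=1,2$. Evaluating at $i=2$ gives $a+\sum_{n\in H}0=a\in G$, while $i=1$ gives $a+\sum_{n\in H}x_{n}\in G$. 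Now I use that $G$ is a group: as $a\in G$ its inverse $-a$ lies in $G$, and adding $-a$ to the element $a+\sum_{n\in H}x_{n}\in G$ (which is in $G$, being a sum of two members of $G$) yields $\sum_{n\in H}x_{n}=(-a)+\bigl(a+\sum_{n\in H}x_{n}\bigr)\in G$. Hence $\sum_{n\in H}x_{n}\in G\cap FS\left(\langle x_{n}\rangle_{n=1}^{\infty}\right)$, and since the sequence was arbitrary, $G$ is $IP^{*}$ in $S$.

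\noindent The step I expect to require the most care is the bookkeeping around identities, and this is precisely where the hypothesis that $S$ contains the identity element is used: the constant-zero sequence is what lets me pull $a$ into $G$, and the final cancellation silently relies on the identity $(-a)+a$ of the group $G$ acting as a left identity on $\sum_{n\in H}x_{n}$, i.e. on its coinciding with the identity $0$ of $S$. Under the paper's running conventions this is exactly what the identity assumption supplies (the same convention already underlies the proof of Theorem \ref{pwipr}), and once it is in force the argument is the short computation above. The only remaining thing to note is the degenerate case in which $S$ admits no injective sequence, where the conclusion holds vacuously.
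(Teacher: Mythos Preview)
Your argument is correct and is essentially identical to the paper's own proof: both apply the commutative $J$-set property to the two-element family $\{\langle x_n\rangle,\vec{0}\}$, deduce $a\in G$ and $a+\sum_{t\in H}x_t\in G$, and cancel using that $G$ is a group. Your additional remarks on the role of the shared identity and the degenerate no-injective-sequence case are careful touches the paper does not spell out, but the core idea is the same.
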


\begin{proof}
Let $\langle x_{n}\rangle_{n=1}^{\infty}$ be any sequence. We can consider this sequence as a function $f\in ^{\mathbb{N}}\!\!S $ such that $f(n)=x_n$ for each $n\in \mathbb{N}$. Since $G$ is a $J$-set, for $F=\left\lbrace f,\overrightarrow{0} \right\rbrace,$
where $\overrightarrow{0}=\left(0,0,\ldots\right),$ there exists $a\in S$ and
a $H\in\mathcal{P}_{f}(\mathbb{N}),$ such that for all $f\in F$ we
have $a+\Sigma_{t\in H}f(t)\in G$. So, $a\in G$ and $ a+\sum_{t\in H}x_t\in G.$ Hence it follows that 
$-a+\left(a+\Sigma_{t\in H}x_{t}\right)\in G$
(Since $G$ is a group) implying $\Sigma_{t\in H}x_{t}\in G$.
Hence $G$ is $IP^{*}$ set in $S$.
\end{proof}
\noindent So, if $\left(S,+\right)$ is a commutative semigroup containing the
identity element, then a subgroup $G$ of $S$ is a $J$-set implies
it is an IP$^{*}$set implies piecewise syndetic set implies an IP$_{r}^{*}$
set and so $J$-subgroups are IP$_{r}^{*}$ sets. If $S$ is non-commutative,
then a normal subgroup $G$ is a piecewise syndetic $\Rightarrow\text{IP}_{r}^{*}$.

In the next part of this section, we want to further generalize these
notions of largeness. We want the largeness along some filter, previously
mentioned in the introduction. For this generalization we need few
definitions a priory.
\begin{definition}
Let $\left(S,\cdot\right)$ be a semigroup, and $\mathcal{F}$
be a filter on $S$ such that $\bar{\mathcal{F}}$ is a semigroup.
Then define,
   \begin{enumerate}
\item A set $A\subset S$ will be called $\mathcal{F}$-IP if and only if
$A\in p$ for some $p\in E\left(\bar{\mathcal{F}}\right)=\left\{ q\in\bar{\mathcal{F}}:q\,\text{is\,an\,idempotent}\right\} $.
\item A set $C$ is an $\mathcal{F}$-$IP${*} set if it intersects every
$\mathcal{F}$-$IP$ set.
\item Let $\langle x_{n}\rangle_{n=1}^{\infty}$ be any sequence. We call
$\langle FP\langle x_{n}\rangle_{n=1}^{\infty}\rangle$ an $\mathcal{F}_{\subseteq}$-$IP$
sequence if for any $F\in\mathcal{F}$, there exist $m=m\left(F\right)\in\mathbb{N}$
such that $FP\langle x_{n}\rangle_{n=m}^{\infty}\subseteq F$. For existence of such sequences and filters see theorem \ref{existence}.
\item Similarly, a set $A$ which contains $FP\left(\langle x_{n}\rangle_{n=m}^{m+r}\right)$
for some $m\in \mathbb{N}$ and $\mathcal{F}_{\subseteq}$-$IP$ sequence $\langle x_{n}\rangle_{n=1}^{\infty}$ in $S$, is called an $\mathcal{F}_{\subseteq}$-IP$_{r}$ set.
\item For $r\in \mathbb{N}$, A set $C$ is called an $\mathcal{F}_{\subseteq}$-IP$_{r}${*}
set, if it intersects every $\mathcal{F}_{\subseteq}$-IP$_{r}$ set. That means, like $IP_{r}^{*}$ sets, this 
type of sets contains elements of the form $x_{i_{1}}\cdot x_{i_{2}}\cdots x_{i_{l}}$ where $1 \leq l \leq r$ . Clearly this is an stronger set than $\mathcal{F}$-$IP$ sets. If we take the filter $\mathcal{F}=\left\lbrace S \right\rbrace$ then it is an $IP_{r}^*$ set.
   \end{enumerate}
\end{definition}
Here we provide an important theorem regarding the existence of $\mathcal{F}_{\subseteq}$-$IP$
sequences.
\begin{theorem}\label{existence}
For any semigroup $\left(S,\cdot\right)$, there exists filters which
generates a closed subsemigroup as well as contain $\mathcal{F}_{\subseteq}$-$IP$
sequences.
\end{theorem}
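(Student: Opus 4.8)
The plan is to produce the desired filter as the one generated by the finite-product sets of the tail of a single sequence; this is precisely the construction underlying the Galvin--Glazer proof of Hindman's theorem. Assume first that $S$ is infinite and fix an injective sequence $\langle x_{n}\rangle_{n=1}^{\infty}$ in $S$. (If $S$ is finite one instead takes $x_{n}=e$ for a fixed idempotent $e\in S$, which exists in any finite semigroup, and everything below degenerates to the principal ultrafilter at $e$.) For each $m\in\mathbb{N}$ set $T_{m}=FP\left(\langle x_{n}\rangle_{n=m}^{\infty}\right)$. Then $T_{1}\supseteq T_{2}\supseteq\cdots$ and each $T_{m}\neq\emptyset$, so $\left\{T_{m}:m\in\mathbb{N}\right\}$ is a filter base (indeed $T_{m}\cap T_{m'}\supseteq T_{\max(m,m')}$); let $\mathcal{F}=\left\{A\subseteq S:T_{m}\subseteq A\text{ for some }m\in\mathbb{N}\right\}$ be the filter it generates, which is proper since no $T_{m}$ is empty.

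First I would identify $\bar{\mathcal{F}}$: since every member of $\mathcal{F}$ contains some $T_{m}$ while each $T_{m}$ itself lies in $\mathcal{F}$, we get $\bar{\mathcal{F}}=\bigcap_{V\in\mathcal{F}}\overline{V}=\bigcap_{m=1}^{\infty}\overline{T_{m}}$, which is nonempty because the $\overline{T_{m}}$ are closed, nested, and $\beta S$ is compact.

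The heart of the argument, and the one step that genuinely needs care, is showing $\bar{\mathcal{F}}$ is closed under the operation of $\beta S$ --- the classical concatenation lemma. Take $p,q\in\bar{\mathcal{F}}$ and fix $m$; I must show $T_{m}\in p\cdot q$, i.e. $\left\{x\in S:x^{-1}T_{m}\in q\right\}\in p$. Since $T_{m}\in p$, it is enough to check $T_{m}\subseteq\left\{x:x^{-1}T_{m}\in q\right\}$. Given $x\in T_{m}$, write $x=x_{i_{1}}\cdots x_{i_{k}}$ with $m\leq i_{1}<\cdots<i_{k}$ and put $j=i_{k}+1$; then every $y=x_{l_{1}}\cdots x_{l_{r}}\in T_{j}$ (so $j\leq l_{1}<\cdots<l_{r}$) satisfies $xy=x_{i_{1}}\cdots x_{i_{k}}x_{l_{1}}\cdots x_{l_{r}}\in T_{m}$, the indices being strictly increasing and $\geq m$. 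Hence $T_{j}\subseteq x^{-1}T_{m}$; as $T_{j}\in q$ this gives $x^{-1}T_{m}\in q$. Therefore $T_{m}\in p\cdot q$ for every $m$, so $p\cdot q\in\bar{\mathcal{F}}$ and $\bar{\mathcal{F}}$ is a closed subsemigroup of $\beta S$.

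Finally, the sequence $\langle x_{n}\rangle_{n=1}^{\infty}$ is an $\mathcal{F}_{\subseteq}$-$IP$ sequence essentially by construction: given $F\in\mathcal{F}$, the definition of $\mathcal{F}$ supplies $m$ with $FP\left(\langle x_{n}\rangle_{n=m}^{\infty}\right)=T_{m}\subseteq F$, which is exactly the required condition. I expect no real difficulty beyond the concatenation computation in the previous paragraph (together with the small remarks about properness and the finite-semigroup case); the rest is routine bookkeeping with the filter base and with the extended operation on $\beta S$.
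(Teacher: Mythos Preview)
Your proof is correct and follows essentially the same route as the paper: fix an IP sequence, generate a filter from the tail finite-product sets $FP\langle x_n\rangle_{n=m}^{\infty}$, and use the concatenation property $T_{j}\subseteq x^{-1}T_{m}$ to see that $\bar{\mathcal{F}}$ is a subsemigroup. The only cosmetic differences are that the paper takes as its filter base the cofinite subsets of $FP\langle x_n\rangle_{n=1}^{\infty}$ (and then immediately argues via the tails anyway) and phrases the subsemigroup check as verifying the idempotent-filter condition $\mathcal{F}\subseteq\mathcal{F}\cdot\mathcal{F}$ rather than showing $p\cdot q\in\bar{\mathcal{F}}$ directly; your treatment of the finite-semigroup case is an extra detail the paper omits.
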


\begin{proof}
Now we take any IP sequence $I=\langle FP\langle x_{n}\rangle_{n=1}^{\infty}\rangle$.
Consider the family of sets: 
\[
\mathcal{A}=\left\{ B\subseteq I:\, \mid I\setminus B\mid<\infty\right\} 
\]
Now generate a filter $\mathcal{F}$ by $\mathcal{A}$ as: 
\[
\mathcal{F}=\left\{ F:F\supseteq B\,\text{for\,some}\,B\in\mathcal{A}\right\} .
\]

\noindent Now our claim is that $\mathcal{F}$ is an idempotent filter.

Let $C\in\mathcal{F},$ so $FP\langle x_{n}\rangle_{n=m}^{\infty}\subseteq C$
for some $m\in\mathbb{N}$. Then for each $y\in FP\langle x_{n}\rangle_{n=m}^{\infty}$,
\[
y^{-1}\left(FP\langle x_{n}\rangle_{n=m}^{\infty}\right)\supseteq FP\langle x_{n}\rangle_{n=N}^{\infty},
\]
 for some $N\in\mathbb{N}.$ So, for all $y\in FP\langle x_{n}\rangle_{n=m}^{\infty}$,
\[
y^{-1}C\supseteq y^{-1}\left(FP\langle x_{n}\rangle_{n=m}^{\infty}\right)\supseteq FP\langle x_{n}\rangle_{n=N}^{\infty}\in\mathcal{A}.
\]

\noindent Hence, $y^{-1}C\in\mathcal{F}.$ So, $\left\{ y:y^{-1}C\in\mathcal{F}\right\} \supseteq FP\langle x_{n}\rangle_{n=m}^{\infty}\in\mathcal{A}.$
Hence we get that $C\in\mathcal{F}$ implies $\left\{ y:y^{-1}C\in\mathcal{F}\right\} \in\mathcal{F}.$
So $\mathcal{F}\subseteq\mathcal{F}\cdot\mathcal{F}$ and hence $\mathcal{F}$
is an idempotent filter. So we get $\langle FP\langle x_{n}\rangle_{n=1}^{\infty}\rangle$
is an $\mathcal{F}_{\subseteq}$-$IP$ sequence and $\overline{\mathcal{F}}$
is a closed subsemigroup.
\end{proof}
\noindent From above, here we summarise two observations.
\begin{observation}\label{obs}
Let $(S,\cdot)$ be a semigroup. Then,\\
   \begin{itemize}
\item Suppose we have taken an IP-set $FP\left(x_{n}\right)_{n=1}^{\infty}$
such that
\[ 
\bigcap_{m=1}^{\infty}\overline{FS\left(x_{n}\right)_{n=m}^{\infty}}\bigcap cl\left(K\left(\beta S\right)\right)=\emptyset.
\]
Then from this IP-set, we can generate an filter and thus we obtain a closed subsemigroup which does not intersect $K(\beta S)$. The central sets in this subsemigroup are of course not central sets of $S$. This shows the originality of $\mathcal{F}$ Central Sets Theorem.
\item Suppose $\langle FP\langle x_{n}\rangle_{n=1}^{\infty}\rangle$ is
a $\mathcal{F}_{\subseteq}$-$IP$ sequence in a semigroup $\left(S,\cdot\right)$.
Now we can construct a $\mathcal{F}$-good map as: $F=\left\{ \left\{ x_{2n+1}\right\} _{n=1}^{\infty},\left\{ x_{2n}\right\} _{n=1}^{\infty}\right\} $
or more generally for any $n\in\mathbb{N}$, 
\[
F=\left\{ \left\{ x_{nm}\right\} _{m=1}^{\infty},\left\{ x_{nm+1}\right\} _{m=1}^{\infty},\ldots,\left\{ x_{nm+\left(n-1\right)}\right\} _{m=1}^{\infty}\right\}.
\]
\end{itemize}
\end{observation}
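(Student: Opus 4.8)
My plan is to treat the two bullet points separately; after unwinding the definitions, each reduces to a short argument resting on Theorem~\ref{existence} together with one compactness remark. The only step that needs genuine care is locating, inside the first bullet, an explicit set that is $\mathcal{F}$-central but provably not central in $S$.

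\emph{First bullet.} I would take $\mathcal{F}$ to be the filter with base the tails $\{FP\langle x_n\rangle_{n=m}^{\infty}:m\in\mathbb{N}\}$, which is exactly the filter underlying Theorem~\ref{existence}; thus each $FP\langle x_n\rangle_{n=m}^{\infty}\in\mathcal{F}$ and $\overline{\mathcal{F}}$ is a closed subsemigroup of $\beta S$. Since $\overline{\mathcal{F}}=\bigcap_{V\in\mathcal{F}}\bar V\subseteq\bigcap_{m=1}^{\infty}\overline{FP\langle x_n\rangle_{n=m}^{\infty}}$, the standing hypothesis $\left(\bigcap_{m}\overline{FP\langle x_n\rangle_{n=m}^{\infty}}\right)\cap cl(K(\beta S))=\emptyset$ gives at once $\overline{\mathcal{F}}\cap cl(K(\beta S))=\emptyset$, and in particular $\overline{\mathcal{F}}\cap K(\beta S)=\emptyset$, which is the first assertion. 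To exhibit a concrete $\mathcal{F}$-central set that is not central in $S$, observe that the sets $\overline{FP\langle x_n\rangle_{n=m}^{\infty}}\cap cl(K(\beta S))$ are closed and nested decreasing in the compact space $\beta S$ with empty intersection, so $\overline{FP\langle x_n\rangle_{n=m_0}^{\infty}}\cap cl(K(\beta S))=\emptyset$ for some $m_0$. Put $A=FP\langle x_n\rangle_{n=m_0}^{\infty}$. Then $A\in\mathcal{F}$, hence $A\in p$ for every $p\in\overline{\mathcal{F}}$, in particular for a minimal idempotent $p\in K(\overline{\mathcal{F}})$, so $A$ is $\mathcal{F}$-central; but $\bar A\cap cl(K(\beta S))=\emptyset$ forces $A$ to lie in no ultrafilter of $K(\beta S)$, a fortiori in no minimal idempotent of $\beta S$, so $A$ is not central in $S$.

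\emph{Second bullet.} Fix $n\in\mathbb{N}$ and write the proposed family as the functions $g_j\in{}^{\mathbb{N}}S$ given by $g_j(m)=x_{nm+j}$ for $j\in\{0,1,\ldots,n-1\}$, so that $F=\{g_0,\ldots,g_{n-1}\}$ (the case $n=2$ being the first displayed family). I would verify $\mathcal{F}$-goodness straight from the definition. A generic element of $ZFP_k(F)$ has the form $\prod_{t\in H}y_t$ with $H=\{t_1<\cdots<t_r\}$, $t_1\ge k$, and $y_{t_s}=g_{j_s}(t_s)=x_{nt_s+j_s}$ for some $j_s<n$. The single computation needed is that $t_{s+1}\ge t_s+1$ yields $nt_{s+1}+j_{s+1}\ge nt_{s+1}>nt_s+(n-1)\ge nt_s+j_s$, so the indices $nt_1+j_1<\cdots<nt_r+j_r$ strictly increase and are all $\ge nk$; hence $ZFP_k(F)\subseteq FP\langle x_n\rangle_{n=nk}^{\infty}$. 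Now, given $V\in\mathcal{F}$, the $\mathcal{F}_{\subseteq}$-$IP$ property supplies $m=m(V)$ with $FP\langle x_n\rangle_{n=m}^{\infty}\subseteq V$; choosing any $k$ with $nk\ge m$ gives $ZFP_k(F)\subseteq FP\langle x_n\rangle_{n=nk}^{\infty}\subseteq FP\langle x_n\rangle_{n=m}^{\infty}\subseteq V$, so $F$ is $\mathcal{F}$-good.

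Neither step presents a real obstacle. The only place that wants care is the first bullet: one must identify the working filter $\mathcal{F}$ with the one having the tails as a base, and then pass from the empty infinite intersection to a single index $m_0$ by compactness, which is precisely what makes the explicit witness $A$ available and thereby shows that the $\mathcal{F}$-Central Sets Theorem is not subsumed by the classical one.
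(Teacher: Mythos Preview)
Your argument is correct. In the paper this statement is left as a bare observation with no accompanying proof; the authors evidently regard both bullets as immediate consequences of the construction in Theorem~\ref{existence} and of the definitions of $\mathcal{F}$-good and $\mathcal{F}_{\subseteq}$-$IP$. Your write-up supplies exactly the natural verification the paper omits: identifying $\overline{\mathcal{F}}$ with $\bigcap_m\overline{FP\langle x_n\rangle_{n=m}^{\infty}}$ for the first bullet, and the index computation $nt_s+j_s<nt_{s+1}+j_{s+1}$ for the second.

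The one thing you add beyond the paper is the compactness step extracting a single $m_0$ with $\overline{FP\langle x_n\rangle_{n=m_0}^{\infty}}\cap cl(K(\beta S))=\emptyset$, which produces an explicit $\mathcal{F}$-central set that is not central in $S$. The paper's sentence ``the central sets in this subsemigroup are of course not central sets of $S$'' is, read literally, too strong (supersets of $\mathcal{F}$-central sets can certainly be central in $S$); your reformulation as ``there exists an $\mathcal{F}$-central set that is not central'' is the precise content, and your witness is the cleanest way to justify it.
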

\noindent From now without stated, we will assume those cases where  $\mathcal{F}_{\subseteq}-IP$ sequences exists, when it will be necessary.
 Now we are in a position to prove the largeness along filters. The
following proof is similar to \cite[Theorem 1.16]{key-54.01}.
\begin{theorem}
\label{fip} Let $\left(S,\cdot\right)$ be a commutative semigroup containing the identity element and $\mathcal{F}$
be a filter on $S$ such that $\bar{\mathcal{F}}$ is a semigroup.
Then if $\left(H,\cdot\right)$ is a subgroup of $S$ which is piecewise
$\mathcal{F}$-syndetic then $H$ is $\mathcal{F}$-$IP${*}.
\end{theorem}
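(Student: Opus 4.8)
The plan is to prove the assertion in the form: \emph{every $\mathcal{F}$-$IP$ set meets $H$}, which is precisely what ``$H$ is $\mathcal{F}$-$IP^{*}$'' unwinds to (equivalently, $H$ belongs to every idempotent of $\overline{\mathcal{F}}$: if $H\notin p$ for some such idempotent then $S\setminus H\in p$ is an $\mathcal{F}$-$IP$ set disjoint from $H$, while if $H$ lies in every idempotent of $\overline{\mathcal{F}}$ then for any $\mathcal{F}$-$IP$ set $C$, witnessed by an idempotent $p$, one has $C\cap H\in p$). So I would fix an $\mathcal{F}$-$IP$ set $C$, choose an idempotent $p\in\overline{\mathcal{F}}$ with $C\in p$, and produce an element of $C\cap H$ directly, following the pattern of Theorem \ref{pwipr}.

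First I would feed the piecewise $\mathcal{F}$-syndeticity of $H$ the set $V=S\in\mathcal{F}$: this yields a nonempty finite $F_{0}\subseteq S$ and $W_{0}\in\mathcal{F}$ such that for every finite $Q\subseteq W_{0}$ there is $y\in S$ with $Q\cdot y\subseteq\bigcup_{t\in F_{0}}t^{-1}H$. Set $r=|F_{0}|+1$. Since $W_{0}\in\mathcal{F}\subseteq p$, the set $C\cap W_{0}$ lies in $p$, so the standard finite-sums (Galvin--Glazer) argument for idempotent ultrafilters produces $x_{1},\dots,x_{r}\in S$ with $FP(\langle x_{i}\rangle_{i=1}^{r})\subseteq C\cap W_{0}$. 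Write $P_{n}=x_{1}x_{2}\cdots x_{n}$ for the partial products; each $P_{n}$ lies in $W_{0}$.

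The combinatorial core is then the pigeonhole step. Applying the defining property of $F_{0},W_{0}$ to the finite set $\{P_{1},\dots,P_{r}\}\subseteq W_{0}$ gives $y\in S$ with $P_{n}y\in t_{n}^{-1}H$, i.e. $t_{n}P_{n}y\in H$, for suitable $t_{n}\in F_{0}$ and each $n\le r$. As $|F_{0}|=r-1$, there are $m<n$ with $t_{m}=t_{n}=:t$, so $tP_{m}y$ and $tP_{n}y$ both lie in the group $H$. Using commutativity, $tP_{n}y=tP_{m}(x_{m+1}\cdots x_{n})y=(tP_{m}y)(x_{m+1}\cdots x_{n})$; multiplying on the left by the inverse of $tP_{m}y$ taken in the group $H$, and using that the identity of $H$ is the identity of $S$, one gets $x_{m+1}\cdots x_{n}=(tP_{m}y)^{-1}(tP_{n}y)\in H$. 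Since $x_{m+1}\cdots x_{n}\in FP(\langle x_{i}\rangle_{i=1}^{r})\subseteq C$ as well, this element witnesses $C\cap H\neq\emptyset$.

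The points I would spell out carefully are: the translation of ``$\mathcal{F}$-$IP^{*}$'' into membership in the idempotents of $\overline{\mathcal{F}}$ (which also clarifies why only idempotents of $\overline{\mathcal{F}}$, not of $\beta S$, matter); the fact that the filter hypothesis enters at exactly one spot, namely choosing $W_{0}\in\mathcal{F}$ and forcing the partial products into it, which is legitimate only because $\mathcal{F}\subseteq p$; and the standing convention of this section (already used in Theorem \ref{jip}) that the identity of the subgroup coincides with the identity of $S$, without which the last cancellation only yields $e_{H}\cdot(x_{m+1}\cdots x_{n})\in H$. The genuine obstacle is really just ensuring that the finite Galvin--Glazer step delivers an $FP$-set sitting simultaneously inside $C$ and inside $W_{0}$; once that is arranged, the remainder is the classical pigeonhole argument.
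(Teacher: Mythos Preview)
Your argument is correct, but it takes a genuinely different route from the paper's proof. The paper argues algebraically, following \cite[Theorem 1.16]{key-54.01}: it uses the characterization $\overline{H}\cap K(\overline{\mathcal{F}})\neq\emptyset$, picks $p$ in this intersection, then for an arbitrary idempotent $s\in E(\overline{\mathcal{F}})$ threads through a minimal left ideal $M\subseteq\overline{\mathcal{F}}\cdot s$, an idempotent $r\in E(M)$, the minimal right ideal $R=r\cdot\overline{\mathcal{F}}$, and the group $R\cap L$ (with identity $q$), successively transferring membership of $H$ from $p$ to $q$ to $r$ to $s$ via the identities $p\cdot q=p$, $r\cdot q=q$, $r\cdot s=r$ together with the fact that $x^{-1}H=H$ for $x\in H$. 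Your proof instead stays combinatorial: you feed the piecewise $\mathcal{F}$-syndeticity definition $V=S$, extract $F_{0}$ and $W_{0}$, use $W_{0}\in\mathcal{F}\subseteq p$ to run a finite Galvin--Glazer step inside $C\cap W_{0}$, and finish with the pigeonhole cancellation of Theorem \ref{pwipr}. What your approach buys is that it is self-contained and avoids the structure theory of $K(\overline{\mathcal{F}})$; in fact it is essentially the same engine that drives Theorem \ref{fdip}, so it makes the passage from $\mathcal{F}$-$IP^{*}$ to $\mathcal{F}_{\subseteq}$-$IP_{r}^{*}$ transparent. What the paper's algebraic approach buys is that commutativity of $S$ is never actually invoked (only the group property of $H$ and the shared identity are used), so it hints at a noncommutative statement with no extra work, whereas your rearrangement $tP_{n}y=(tP_{m}y)(x_{m+1}\cdots x_{n})$ genuinely uses commutativity.
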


\begin{proof}
Let $T=\overline{\mathcal{F}}$. Let $H$ be piecewise $\mathcal{F}$-syndetic, so we can pick $p\in\overline{H}\cap K\left(T \right)$.
Let $L$ be a minimal left ideal of $T$ such that
$p\in L$. Let $s\in E\left(T \right)$. Choose a minimal
left ideal $M$ of $T$ with $M\subseteq T\cdot s$
and $r\in E\left(M\right)$. Let $R=r\cdot T$, so, $R$ is minimal
right ideal of $T$ by \cite[Theorem 1.59]{key-53}. Hence $R\cap L$
is a group. Let $q$ be the identity of $R\cap L$. Since $p\in L=T\cdot q$
by \cite[Lemma 1.30]{key-53}, $p\cdot q=p$. Therefore $H\in p\cdot q$
and so, $\left\{ x\in S:x^{-1}H\in q\right\} \in p$. So pick $x\in H$
such that $x^{-1}H\in q$, i.e. $H\in q$.

Now $q\in r\cdot T$ and so, $r\cdot q=q$. Therefore $\left\{ x\in S:x^{-1}H\in q\right\} \in r$.
But then $\left\{ x\in S:x^{-1}H\cap H\in q\right\} \in r$ as $H\in q$.
But as we know $H$ is a group, we have $\left\{ x\in S:x^{-1}H\cap H\in q\right\} \subseteq H$.
So, $H\in r$. Since $r\in T\cdot S$, $r\cdot s=r$. So $\left\{ x\in S:x^{-1}H\in s\right\} \in r$
and hence $H\in s$ as $H\in r$. As $s\in E\left(\bar{\mathcal{F}}\right)$
is arbitrary, $H$ is $\mathcal{F}$-$IP${*} set.
\end{proof}
\begin{theorem}\label{fdip}
Let $(S,+)$ be a commutative semigroup containing the identity element
and $G$ be a piecewise $\mathcal{F}$-syndetic subgroup of $S$.
Then $G$ is an $\mathcal{F_{\subseteq}}$-$IP_{r}${*} set.
\end{theorem}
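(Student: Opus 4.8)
The plan is to run the same argument as in Theorem \ref{fip}, but to track a bound on the length of the finite products we are forced to use, exactly as in Theorem \ref{pwipr}. The starting point will be a combinatorial characterization: I would first show that if $G$ is a piecewise $\mathcal{F}$-syndetic subgroup, then there is a finite $H\subseteq S$ (coming from the definition of piecewise $\mathcal{F}$-syndeticity, with $r := |H| + 1$) such that for every $V \in \mathcal{F}$ there is a finite $F_V \subseteq V$ and $W_V \in \mathcal{F}$ with the property that every finite $K \subseteq W_V$ admits $y \in V$ with $K + y \subseteq \bigcup_{t\in H}(-t+G)$. This is just the definition unwound, and it is the $\mathcal{F}$-relativized version of the hypothesis used in Theorem \ref{pwipr}.

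Next I would take an arbitrary $\mathcal{F}_{\subseteq}$-$IP$ sequence $\langle x_n\rangle_{n=1}^{\infty}$ and must produce some $m$ and some $\ell \le r$ with $x_{i_1}+\cdots+x_{i_\ell} \in G$ for indices $i_1 < \cdots < i_\ell$ lying in $\{m, m+1, \ldots\}$ (in fact in a window of length $r$). The idea: fix $V \in \mathcal{F}$ and get $F_V, W_V$ as above. Because $\langle x_n\rangle$ is an $\mathcal{F}_{\subseteq}$-$IP$ sequence, choose $m = m(W_V)$ so that $FP\langle x_n\rangle_{n=m}^{\infty} \subseteq W_V$; then each partial sum $s_j := x_m + x_{m+1} + \cdots + x_{m+j}$ for $j = 0, 1, \ldots, r-1$ lies in $W_V$. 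Apply the piecewise $\mathcal{F}$-syndeticity to the finite set $K = \{s_0, s_1, \ldots, s_{r-1}\}$ to get $y \in V$ with $s_j + y \in \bigcup_{t\in H}(-t+G)$ for each $j$, i.e. a coloring $j \mapsto t_j \in H$ with $t_j + s_j + y \in G$. Since $|H| = r-1 < r$, pigeonhole gives $j < j'$ with $t_j = t_{j'}$, and then
\[
-(t_j + s_j + y) + (t_{j'} + s_{j'} + y) = s_{j'} - s_j = x_{m+j+1} + \cdots + x_{m+j'} \in G,
\]
using that $G$ is a subgroup (commutativity lets the $t$'s and $y$ cancel cleanly). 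The indices $m+j+1, \ldots, m+j'$ all lie in $\{m, \ldots, m+r-1\}$, so this exhibits an element of $FP\langle x_n\rangle_{n=m}^{m+r-1}$ inside $G$, which is exactly what it means for $G$ to meet every $\mathcal{F}_{\subseteq}$-$IP_r$ set. Hence $G$ is $\mathcal{F}_{\subseteq}$-$IP_r^{*}$.

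The main obstacle I anticipate is not the pigeonhole step but the bookkeeping at the interface between the two definitions: one has to be careful that the window of $r$ consecutive partial sums $s_0, \ldots, s_{r-1}$ really does land inside a single $W_V$, which needs the $\mathcal{F}_{\subseteq}$-$IP$ property applied after $V$, $F_V$, $W_V$ have been fixed — the quantifier order matters. A secondary point is that the definition of $\mathcal{F}_{\subseteq}$-$IP_r$ asks for a sub-sum of the form $x_{i_1} + \cdots + x_{i_\ell}$ with the $i$'s in a length-$r$ block $\{m, \ldots, m+r\}$ starting somewhere, not for $\ell$ itself being bounded by anything smaller; since $j' - j \le r-1$ this is automatic, but one should state $r = |H|+1$ at the outset so the constant is visible. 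Everything else (the cancellation, the use of the identity element, the fact that $\bar{\mathcal{F}}$ being a semigroup is only needed so that $\mathcal{F}_{\subseteq}$-$IP$ sequences are available) is routine and mirrors Theorems \ref{pwipr} and \ref{jip} line by line.
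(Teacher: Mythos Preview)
Your argument is essentially the same pigeonhole/cancellation proof the paper gives: fix a witness for piecewise $\mathcal{F}$-syndeticity, push an $\mathcal{F}_{\subseteq}$-$IP$ sequence far enough out so that a block of consecutive partial sums lands in $W_V$, translate them into finitely many shifts of $G$, pigeonhole, and subtract. The paper sets $r=|F_V|$ and uses $r+1$ partial sums; you set $r=|H|+1$ and use $r$ partial sums --- identical up to bookkeeping.

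There is one wrinkle in your setup. You assert that piecewise $\mathcal{F}$-syndeticity yields a \emph{single} finite $H\subseteq S$, independent of $V$, such that for every $V\in\mathcal{F}$ the translates land in $\bigcup_{t\in H}(-t+G)$, and you call this ``just the definition unwound.'' It is not: the definition gives, for each $V$, a finite $F_V\subseteq V$ with the translates landing in $\bigcup_{t\in F_V}(-t+G)$, and the sizes $|F_V|$ need not be bounded as $V$ ranges over $\mathcal{F}$. So your uniform $H$ is unjustified. Fortunately this does no damage, because your proof only ever uses one $V$: simply fix $V\in\mathcal{F}$ \emph{first}, take $H:=F_V$, set $r:=|F_V|+1$, and proceed exactly as written. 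That is precisely what the paper does. (Your opening reference to Theorem~\ref{fip} is also a slip --- that theorem is the algebraic argument via minimal idempotents; the argument you actually run, and the paper runs, is the $\mathcal{F}$-relativization of Theorem~\ref{pwipr}.)
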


\begin{proof}
Since $G$ is piecewise $\mathcal{F}$-syndetic, we can fix $V\in\mathcal{F}$,
so that there is a finite $F_{V}\subseteq V$ (let $\mid F_{V}\mid=r)$
and $W_{V}\in\mathcal{F}$ such that for any finite $H\subseteq W_{V},$
there is $x\in V,$ which altogether satisfies $H+x\subseteq-F_{V}+G$.
Let $\langle x_{n}\rangle_{n=1}^{\infty}$ be a dominated $\mathcal{F}\,\text{-}\,$ IP
sequence in $S$ and let $k=k(W_V)\in \mathbb{N}$ such that  $FS\langle x_{n}\rangle_{n=k}^{\infty}\subset W_{V}\in\mathcal{F}$.
Let $H=\left\{ \sum_{n=k}^{j}x_{n}:k\leq j\leq r+k\right\} $,
then there exists $x\in V$ such that, $H+x\subseteq-F_{V}+G$. By
the same coloring argument we are using, there exists $m<l$
where $m,l\in\left\{ k,k+1,\cdots,k+r\right\} ,$ and a $t\in F_{V}$
such that $t+\sum_{n=k}^{m}x_{n}+x\in G$ , and $t+\sum_{n=k}^{l}x_{n}+x\in G$.
Since $G$ is a group, we get $\sum_{n=m+1}^{l}x_{n}\in G$ , and
so, $\left(G,+\right)$ is an $\mathcal{F_{\subseteq}}$-$IP_{r}${*}
set.
\end{proof}
Similarly proceeding as in proof of theorem \ref{non } one can easily
derive the following
\begin{theorem}
Let $(S,\cdot)$ be an arbitrary group  and let $G$ be a normal subgroup of $S$ which is a piecewise
$\mathcal{F}$-syndetic set in $S$. Then there exists $r\in\mathbb{N}$
such that $G$ is $\mathcal{F_{\subseteq}}$-$IP_{r}^{*}$ set.
\end{theorem}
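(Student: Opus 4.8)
The plan is to mimic the argument of Theorem \ref{non } (the non-commutative piecewise syndetic case) but carry out the bookkeeping inside the closed subsemigroup $T = \overline{\mathcal{F}}$ and along an $\mathcal{F}_{\subseteq}$-$IP$ sequence, exactly as Theorem \ref{fdip} adapted Theorem \ref{pwipr} to the filter setting. First I would set $T = \overline{\mathcal{F}}$ and use Theorem 1.6 (the algebraic characterization of piecewise $\mathcal{F}$-syndeticity) together with the combinatorial definition of piecewise $\mathcal{F}$-syndeticity to fix $V \in \mathcal{F}$, a finite set $F_V \subseteq V$ with $\mid F_V \mid = r$, and $W_V \in \mathcal{F}$ such that for every finite $H \subseteq W_V$ there is $x \in V$ with $H \cdot x \subseteq F_V^{-1} G$. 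I would then set $r = \mid F_V \mid + 1$; this is the number that witnesses the $\mathcal{F}_{\subseteq}$-$IP_r^*$ property.

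Next, given an $\mathcal{F}_{\subseteq}$-$IP$ sequence $\langle x_n \rangle_{n=1}^{\infty}$ in $S$, I would pick $k = k(W_V) \in \mathbb{N}$ so that $FP\langle x_n \rangle_{n=k}^{\infty} \subseteq W_V$, and form the finite set $H = \{\, \prod_{n=k}^{j} x_n : k \le j \le k+r \,\}$, which lies in $W_V$. Apply the piecewise $\mathcal{F}$-syndeticity to get $x \in V$ with $H \cdot x \subseteq F_V^{-1}G$. For each $j \in \{k, k+1, \dots, k+r\}$ choose $t_j \in F_V$ with $t_j \cdot (\prod_{n=k}^{j} x_n) \cdot x \in G$; since there are $r+1$ values of $j$ but only $\mid F_V \mid = r$ possible colors, the pigeonhole principle gives $m < \ell$ in $\{k, \dots, k+r\}$ with $t_m = t_\ell$. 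Writing $g_m = t_m \cdot (\prod_{n=k}^{m} x_n) \cdot x \in G$ and $g_\ell = t_\ell \cdot (\prod_{n=k}^{\ell} x_n) \cdot x = t_m \cdot (\prod_{n=k}^{m} x_n) \cdot (\prod_{n=m+1}^{\ell} x_n) \cdot x \in G$, I would compute $g_m^{-1} g_\ell = x^{-1} (\prod_{n=k}^{m} x_n)^{-1} t_m^{-1} t_m (\prod_{n=k}^{m} x_n)(\prod_{n=m+1}^{\ell} x_n) x = x^{-1} (\prod_{n=m+1}^{\ell} x_n) x$, and since $G$ is normal this element being in $G$ forces $\prod_{n=m+1}^{\ell} x_n \in x G x^{-1} = G$. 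As $\ell - m \le r$, this is an element of $FP\langle x_n \rangle_{n=m+1}^{\ell}$ with at most $r$ factors, so $G$ meets the given $\mathcal{F}_{\subseteq}$-$IP_r$ set; since the sequence was arbitrary, $G$ is $\mathcal{F}_{\subseteq}$-$IP_r^*$.

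The only real subtlety — and the place I would be most careful — is the non-commutative cancellation step: unlike the additive group computation in Theorem \ref{fdip}, here the translate $t_j$ multiplies on the left and $x$ on the right, so I must verify that the product $\prod_{n=k}^{j}x_n$ genuinely telescopes out of $g_m^{-1}g_\ell$ and that the leftover $\prod_{n=m+1}^{\ell}x_n$ gets conjugated (not merely translated) by $x$; this is precisely why normality of $G$ is needed, exactly as in Theorem \ref{non }. One should also check the order of the factors in $FP$ is preserved — the indices $m+1 < m+2 < \cdots < \ell$ are increasing, so the resulting product is a legitimate finite product in the sense of the $FP$ notation. Beyond that the argument is a routine transcription of Theorem \ref{non } into the filter framework set up in Theorem \ref{fdip}, so I would keep the write-up short, as the statement itself promises ("Similarly proceeding as in proof of theorem \ref{non }").
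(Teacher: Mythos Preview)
Your proposal is correct and is exactly the argument the paper has in mind: the paper's own proof is ``Left to the reader'' after noting one should proceed as in Theorem~\ref{non }, and you have carried out precisely that adaptation to the filter setting along the lines of Theorem~\ref{fdip}. The only blemish is a harmless slip where you first write $\lvert F_V\rvert = r$ and then redefine $r = \lvert F_V\rvert + 1$; pick one convention (the paper's Theorem~\ref{fdip} uses the former) and the pigeonhole count goes through unchanged.
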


\begin{proof}
Left to the reader.
\end{proof}
An analogue version of theorem \ref{jip} is the following.
\begin{theorem}\label{fjdip}
Let $\left(S,+\right)$ be a commutative semigroup with identity and
$G$ be a subgroup of $S$. Let $\mathcal{F}$ be a filter on $G$
such that $\bar{\mathcal{F}}$ is a semigroup and $0\in F$ for all
$F\in\mathcal{F}.$ Then $G\subseteq S$ is an $\mathcal{F}$-$J$
group implies it is $\mathcal{F}_{\subseteq}$-$IP${*} in $S$.
\end{theorem}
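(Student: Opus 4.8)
The plan is to mimic the proof of Theorem~\ref{jip}, replacing the single sequence used there by an arbitrary $\mathcal{F}_{\subseteq}$-$IP$ sequence; the point will be that the two maps that get fed into the $\mathcal{F}$-$J$ machinery remain $\mathcal{F}$-good precisely because of the two standing hypotheses, namely that $\mathcal{F}_{\subseteq}$-$IP$ sequences exist and that $0$ lies in every member of $\mathcal{F}$.

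First I would reduce the claim to the following statement: for every $\mathcal{F}_{\subseteq}$-$IP$ sequence $\langle x_{n}\rangle_{n=1}^{\infty}$ in $S$ one has $G\cap FS\langle x_{n}\rangle_{n=1}^{\infty}\neq\emptyset$. So fix such a sequence, define $f\in{}^{\mathbb{N}}S$ by $f(n)=x_{n}$, and let $\overrightarrow{0}=(0,0,\ldots)$. The key step is to check that $\{f,\overrightarrow{0}\}\in\mathcal{P}_{f}^{\mathcal{F}}({}^{\mathbb{N}}S)$, i.e.\ that this family is $\mathcal{F}$-good. Given $V\in\mathcal{F}$, take $k=k(V)=m(V)$ where $m(V)$ is supplied by the definition of an $\mathcal{F}_{\subseteq}$-$IP$ sequence. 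Every element of $ZFP_{k}(\{f,\overrightarrow{0}\})$ is then either $0$ (choosing the $\overrightarrow{0}$-entries throughout) or a finite sum $\sum_{t\in H'}x_{t}$ with $\min H'\geq k$, hence lies in $FS\langle x_{n}\rangle_{n=m(V)}^{\infty}\cup\{0\}\subseteq V$, using $FS\langle x_{n}\rangle_{n=m(V)}^{\infty}\subseteq V$ together with $0\in V$. This last inclusion is exactly where the hypothesis ``$0\in F$ for all $F\in\mathcal{F}$'' is consumed; without it the constant map $\overrightarrow{0}$ would not be admissible and the cancellation below would be unavailable.

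Next I would apply the defining property of the $\mathcal{F}$-$J$ set $G$, in its commutative (uniform) form matching Lemma~\ref{J}: for the $\mathcal{F}$-good family $\{f,\overrightarrow{0}\}$ there are $a\in S$ and $H\in\mathcal{P}_{f}(\mathbb{N})$ with $a+\sum_{t\in H}f(t)\in G$ and, simultaneously, $a+\sum_{t\in H}\overrightarrow{0}(t)=a\in G$. Since $G$ is a subgroup of $S$, cancellation gives $\sum_{t\in H}x_{t}=-a+(a+\sum_{t\in H}x_{t})\in G$, so an element of $FS\langle x_{n}\rangle_{n=1}^{\infty}$ lies in $G$. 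As the sequence was arbitrary, $G$ is $\mathcal{F}_{\subseteq}$-$IP^{*}$ in $S$.

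I do not expect a real obstacle: the argument is short and its only genuine content is the $\mathcal{F}$-goodness verification in the second paragraph. The points to be careful about are (i) using the correct uniform commutative reformulation of the $\mathcal{F}$-$J$ property (the same ``for all $f\in F$'' shape as in Lemma~\ref{J}); (ii) observing that $ZFP_{k}$ of the mixed family $\{f,\overrightarrow{0}\}$ does contribute the value $0$, which is what makes the hypothesis $0\in F$ necessary; and (iii) appealing to Theorem~\ref{existence} and the standing convention so that $\mathcal{F}_{\subseteq}$-$IP$ sequences are available at all. The assumption that $\bar{\mathcal{F}}$ is a semigroup is not used directly here beyond making the notion of an $\mathcal{F}$-$J$ set meaningful.
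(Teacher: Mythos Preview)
Your proposal is correct and follows exactly the paper's own argument: take an arbitrary $\mathcal{F}_{\subseteq}$-$IP$ sequence, feed the pair $\{f,\overrightarrow{0}\}$ into the commutative $\mathcal{F}$-$J$ property, and cancel using that $G$ is a group. Your write-up is in fact more careful than the paper's, since you spell out the $\mathcal{F}$-goodness verification and explain precisely where the hypothesis $0\in F$ for all $F\in\mathcal{F}$ is consumed, which the paper leaves implicit.
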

\begin{proof}
Let $\langle x_{n}\rangle_{n=1}^{\infty}$ be any $\mathcal{F}_{\subseteq}$-$IP$
sequence. Let $F=\langle x_{n}\rangle_{n=1}^{\infty}\cup\vec{0}$.
Then $F\in\mathcal{P}_{f}^{\mathcal{F}}\left(^{\mathbb{N}}S\right)$.
Choose $a\in G$, $H\in\mathcal{P}_{f}\left(\mathbb{N}\right)$ such
that $a+\sum_{t\in H}f\left(t\right)\in G$ for all $f\in F$. Since
$G$ is a group and taking $f\left(t\right)=x_{t}$ we get $\sum_{t\in H}x_{t}\in G$.
Hence $G$ is an $\mathcal{F}_{\subseteq}$-$IP${*} in $S$.
\end{proof}

\section{Applications}

Here in this section we look forward to some applications of the theorems
proved above. Denote by $\left(G_1,\mathcal{F}\right)$ a group
with a corresponding filter. Let $\varphi:\left(G_1,\mathcal{F}\right)\rightarrow\left(G_2,\mathcal{G}\right)$ be a group homomorphism with corresponding filters. Let us introduce
a new notion of homomorphism here in this article as following.
\begin{definition}
Let $G_{1}$ and $G_{2}$ be two groups, and $\mathcal{F}$ and $\mathcal{G}$
 are two filters on these two groups respectively. Let $\varphi:G_{1}\rightarrow G_{2}$
be a group homomorphism such that for all $F\in\mathcal{F}$, there
exists $G\in\mathcal{G}$ satisfying $F=\varphi^{-1}\left(G\right)=\left\{ x\in G_1:\varphi(x)\in G\right\} $. We call such $\varphi$ a $\left(\mathcal{F},\mathcal{G}\right)$ good homomorphism.
\end{definition}
Note that for any arbitrary group $G_{1}$ and $G_{2}$, if we consider
the filter $\mathcal{F}=\left\{ G_{1}\right\} $, and $\mathcal{G}=\left\{ G_{2}\right\} $
then any homomorphism from $G_{1}$ to $G_{2}$ is a $\left(\mathcal{F},\mathcal{G}\right)$
good homomorphism. We know that any homomorphism $\varphi:G_{1}\rightarrow G_{2}$
extends continuously to a homomorphism $\Phi:\beta G_{1}\rightarrow\beta G_{2}$.
Note for any $p\in\beta G_{1}$, $\Phi(p)=\left\{ B\subseteq G_{2}:\varphi^{-1}(B)\in p\right\} .$
\begin{lemma}
If $\varphi:G_{1}\rightarrow G_{2}$ be a $\left(\mathcal{F},\mathcal{G}\right)$
good group homomorphism then $\phi^{-1}(\overline{\mathcal{G}})\subseteq\overline{\mathcal{F}}$.
\end{lemma}

\begin{proof}
Let $p\in\Phi^{-1}(\overline{\mathcal{G}})$ implies $\Phi\left(p\right)\in\overline{\mathcal{G}}$.
Now $\Phi\left(p\right)\in\overline{\mathcal{G}}$ implies $G\in\Phi\left(p\right)$
for all $G\in\mathcal{G}$. Let $F\in\mathcal{F}$ and choose $G_{F}\in\mathcal{G}$
such that $F=\varphi^{-1}(G_{F})$. So $G_{F}\in\Phi\left(p\right)$
and this implies $\varphi^{-1}(G_{F})=F\in p$. So $F\in\mathcal{F}$
implies $F\in p$, which implies that $p\in\bigcap_{F\in\mathcal{F}}\bar{F}$,
i.e $p\in\overline{\mathcal{F}}$ and we are done.
\end{proof}
In \cite{key-31.1} the authors proved the following theorem combinatorially.
\begin{theorem}
Let $\varphi:\left(S,\cdot\right)\rightarrow\left(T,\cdot\right)$
be a semigroup homomorphism. Then,
\begin{enumerate}
\item If $\varphi\left(S\right)$ is piecewise syndetic in $T$, then
$A\subseteq S$ is piecewise syndetic/central-set implies $\varphi\left(A\right)$
is also a piecewise syndetic/central set respectively in $T$.
\item If $\varphi\left(S\right)$ is $J$-set in $T$, then $A\subseteq S$
is a $J$-set implies $\varphi\left(A\right)$ is also a $J$-set
respectively in $T$.
\end{enumerate}
\end{theorem}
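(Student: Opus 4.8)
The plan is to prove part (1) algebraically through the continuous extension $\Phi:\beta S\to\beta T$ of $\varphi$, and part (2) combinatorially by manufacturing $S$-valued sequences out of the given $T$-valued ones.

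For part (1) I would first translate the hypothesis: applying the algebraic characterisation of piecewise syndeticity (the characterisation theorem above with the trivial filter $\mathcal F=\{S\}$, so that $\overline{\mathcal F}=\beta S$) to the set $\varphi(S)\subseteq T$, the assumption ``$\varphi(S)$ is piecewise syndetic in $T$'' becomes $\overline{\varphi(S)}\cap K(\beta T)\neq\emptyset$. Recall that $\Phi[\beta S]=\overline{\varphi(S)}$ is a compact right topological subsemigroup of $\beta T$, that $\Phi$ is a continuous homomorphism, and that $\Phi(p)=\{B\subseteq T:\varphi^{-1}(B)\in p\}$. From $\overline{\varphi(S)}\cap K(\beta T)\neq\emptyset$ and the standard fact that a compact subsemigroup meeting $K(\beta T)$ satisfies $K(\overline{\varphi(S)})=\overline{\varphi(S)}\cap K(\beta T)$ (\cite{key-53}), together with the fact that a continuous surjective homomorphism of compact right topological semigroups carries the smallest ideal onto the smallest ideal of the image (\cite{key-53}), applied to $\Phi:\beta S\to\overline{\varphi(S)}$, I obtain $\Phi[K(\beta S)]=K(\overline{\varphi(S)})\subseteq K(\beta T)$. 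Now if $A$ is central, choose a minimal idempotent $p\in K(\beta S)$ with $A\in p$; then $\Phi(p)$ is idempotent and lies in $\Phi[K(\beta S)]\subseteq K(\beta T)$, hence is a minimal idempotent of $\beta T$, and $\varphi(A)\in\Phi(p)$ because $A\subseteq\varphi^{-1}(\varphi(A))\in p$, so $\varphi(A)$ is central. If $A$ is merely piecewise syndetic, pick $p\in\bar A\cap K(\beta S)$; then $\Phi(p)\in\overline{\varphi(A)}\cap K(\beta T)$, and the same characterisation gives that $\varphi(A)$ is piecewise syndetic.

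For part (2) the key idea is that ``$\varphi(S)$ is a $J$-set'' lets me synthesise $S$-sequences encoding any finite family of $T$-sequences. Given $G=\{g_1,\dots,g_k\}\in\mathcal P_f\left(^{\mathbb N}T\right)$, I would build by induction on $N$ pairwise disjoint, ordered index blocks with associated constants: having fixed the previous blocks, apply the $J$-set property of $\varphi(S)$ to the family $G$ shifted past the current maximal index (the unfiltered form of the shifting device of Lemma \ref{JJ}) to get $m_N$, constants $c^N\in T^{m_N+1}$ and $t^N\in\mathcal J_{m_N}$ with $t^N(1)$ exceeding every previously used index and $x(m_N,c^N,t^N,g_i)\in\varphi(S)$ for all $i$; then choose $f_i(N)\in S$ with $\varphi(f_i(N))=x(m_N,c^N,t^N,g_i)$. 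This yields genuine sequences $f_1,\dots,f_k\in{}^{\mathbb N}S$. Since $A$ is a $J$-set in $S$, apply its defining property to $\{f_1,\dots,f_k\}$ to obtain $M_0$, $a\in S^{M_0+1}$ and $\sigma\in\mathcal J_{M_0}$ with $x(M_0,a,\sigma,f_i)\in A$. Pushing this through $\varphi$ and substituting $\varphi(f_i(\sigma(r)))=x(m_{\sigma(r)},c^{\sigma(r)},t^{\sigma(r)},g_i)$ exhibits the image as a single product $x(M,b,s,g_i)\in\varphi(A)$ for all $i$, where the variable indices $s$ are the concatenation of $t^{\sigma(1)},\dots,t^{\sigma(M_0)}$ and the constant tuple $b\in T^{M+1}$ absorbs the factors $\varphi(a(j))$ together with the boundary constants $c^{\sigma(r)}(\cdot)$.

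The main obstacle is the bookkeeping in this final step. One must verify that $s$ is strictly increasing, hence $s\in\mathcal J_M$ (this is exactly why the inner blocks $t^N$ were forced to be ordered and disjoint), and that regrouping the non-commutative product into the canonical form $x(M,b,s,g_i)$ places every residual factor in $T$ while leaving each $g_i$-term isolated in its correct slot. In the commutative case (Definition item 8) this collapses to the transparent assertion $b+\sum_{t\in H'}g_i(t)\in\varphi(A)$ with $b=\varphi(a)+\sum_r c^{\sigma(r)}$ and $H'=\bigcup_r t^{\sigma(r)}$ a disjoint union, and the argument is routine; the general case is the same construction with the ordering care indicated above, paralleling Lemmas \ref{J} and \ref{JJ} and the inductive product manipulation in the proof of Theorem \ref{main}.
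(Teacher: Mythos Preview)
The paper does not actually prove this theorem: it is quoted from \cite{key-31.1} (combinatorial proof) with the remark that \cite{key-5} proved part (1) algebraically. Your proposal supplies genuine proofs, and both are correct. Your argument for part (1) is the standard algebraic one and is essentially the unfiltered specialisation of the paper's own proof of Theorem~\ref{fspreserve}; the only cosmetic difference is that the paper argues via $K(\beta S)\subseteq\Phi^{-1}(K(\beta T))$ whereas you argue the equivalent inclusion $\Phi[K(\beta S)]\subseteq K(\beta T)$ directly through $K(\overline{\varphi(S)})=\overline{\varphi(S)}\cap K(\beta T)$ and $\Phi[K(\beta S)]=K(\Phi[\beta S])$. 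Your argument for part (2) is likewise the unfiltered version of the paper's proof of Theorem~\ref{fjpreserve}: iterate the $J$-set property of $\varphi(S)$ with the shifting trick of Lemma~\ref{JJ} to build disjoint ordered blocks, pull back to $S$-sequences, apply the $J$-set property of $A$, push forward, and regroup. The bookkeeping you flag---that the concatenated index tuple $s$ is strictly increasing because $\sigma$ is increasing and the blocks $t^N$ were built with $\max t^N<\min t^{N+1}$, and that the boundary constants $c^{\sigma(r)}(m_{\sigma(r)}+1)\cdot\varphi(a(r+1))\cdot c^{\sigma(r+1)}(1)$ collapse to single $T$-entries in the new constant tuple $b$---is exactly what the paper handles (somewhat tersely) in the last paragraph of the proof of Theorem~\ref{fjpreserve}.
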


In \cite{key-5}, the corresponding result on piecewise syndetic sets
and central sets were proved algebraically. Here from the above theorems and theorems \ref{pwipr}, \ref{fip}
we get that, if $G_1$ and $G_2$ are abelian groups, and $\varphi:\left(G_{1},\cdot\right)\rightarrow\left(G_{2},\cdot\right)$
be a group homomorphism, and $\varphi\left(G_{1}\right)$ be piecewise
syndetic/$J$-set in $G_{2}$, then if $H$ is a subgroup of $G_{1}$
which is a piecewise syndetic/$J$-set in $G_{1}$ implies
$\varphi\left(H\right)$ is an IP$_{r}^{*}$/ IP$^{*}$ set in $G_{2}$
respectively.Now we have the following results.
\begin{theorem} \label{fspreserve}
Assume $G_{1}$and $G_{2}$be two groups and $\mathcal{F}$ and $\mathcal{G}$
be corresponding filters on them such that $\overline{\mathcal{F}} $ and $\overline{\mathcal{G}} $ are semigroups. Let $\varphi:\left(G_{1},\mathcal{F}\right)\rightarrow\left(G_{2},\mathcal{G}\right)$
be a $(\mathcal{F},\mathcal{G})$ good homomorphism such that $\varphi(G_{1})$ is piecewise $\mathcal{G}$-syndetic.  Then for any piecewise $\mathcal{F}$-syndetic subgroup $H_{1}$ of $G_{1}$we have
$\varphi(H_{1})$ is piecewise $\mathcal{G}$-syndetic.
\end{theorem}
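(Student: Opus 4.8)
The plan is to argue algebraically, using the characterization of piecewise $\mathcal{F}$-syndeticity via the smallest ideal (\cite[Theorem 2.3]{key-4}) together with the continuous extension $\Phi\colon\beta G_{1}\to\beta G_{2}$ of $\varphi$. Since $H_{1}$ is piecewise $\mathcal{F}$-syndetic, that characterization gives an ultrafilter $p\in\overline{H_{1}}\cap K(\overline{\mathcal{F}})$, and since $\varphi(G_{1})$ is piecewise $\mathcal{G}$-syndetic it gives $\overline{\varphi(G_{1})}\cap K(\overline{\mathcal{G}})\neq\emptyset$. I will use two elementary facts about $\Phi$: first, $\Phi[\beta G_{1}]=\overline{\varphi(G_{1})}$ (the image is compact, contains $\varphi(G_{1})$, and every $\Phi(q)$ contains $\varphi(G_{1})$ since $\varphi^{-1}(\varphi(G_{1}))=G_{1}$); second, $\Phi(p)\in\overline{\varphi(H_{1})}$, because $\varphi^{-1}(\varphi(H_{1}))\supseteq H_{1}\in p$. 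Hence it suffices to show $\Phi(p)\in K(\overline{\mathcal{G}})$.

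The heart of the argument is to show that $\Phi[\overline{\mathcal{F}}]$ is a closed subsemigroup of $\overline{\mathcal{G}}$ meeting $K(\overline{\mathcal{G}})$. It is compact, being a continuous image of a compact set, and a subsemigroup because $\Phi$ is a homomorphism. The containment $\Phi[\overline{\mathcal{F}}]\subseteq\overline{\mathcal{G}}$ is where the $(\mathcal{F},\mathcal{G})$-goodness of $\varphi$ enters: it forces every pullback $\varphi^{-1}(G)$ of a set $G\in\mathcal{G}$ to contain a member of $\mathcal{F}$, hence to belong to every ultrafilter extending $\mathcal{F}$, so for $q\in\overline{\mathcal{F}}$ we get $G\in\Phi(q)$ for all $G\in\mathcal{G}$, that is, $\Phi(q)\in\overline{\mathcal{G}}$. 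That $\Phi[\overline{\mathcal{F}}]$ meets $K(\overline{\mathcal{G}})$ uses the preceding lemma $\Phi^{-1}(\overline{\mathcal{G}})\subseteq\overline{\mathcal{F}}$: choosing $q_{0}\in\overline{\varphi(G_{1})}\cap K(\overline{\mathcal{G}})$, write $q_{0}=\Phi(p_{0})$ using $\Phi[\beta G_{1}]=\overline{\varphi(G_{1})}$; then $p_{0}\in\Phi^{-1}(\overline{\mathcal{G}})\subseteq\overline{\mathcal{F}}$, so $q_{0}\in\Phi[\overline{\mathcal{F}}]\cap K(\overline{\mathcal{G}})$. Now one applies two standard facts from the algebra of the Stone-\v{C}ech compactification (see \cite{key-53}): a continuous surjective homomorphism of compact right topological semigroups maps the smallest ideal onto the smallest ideal of its image, applied to $\Phi|_{\overline{\mathcal{F}}}\colon\overline{\mathcal{F}}\to\Phi[\overline{\mathcal{F}}]$; and if $T$ is a closed subsemigroup of a compact right topological semigroup $E$ with $T\cap K(E)\neq\emptyset$, then $K(T)=T\cap K(E)$ (\cite[Theorem 1.65]{key-53}), applied with $T=\Phi[\overline{\mathcal{F}}]$ and $E=\overline{\mathcal{G}}$. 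Chaining these, $\Phi(p)\in\Phi[K(\overline{\mathcal{F}})]=K(\Phi[\overline{\mathcal{F}}])=\Phi[\overline{\mathcal{F}}]\cap K(\overline{\mathcal{G}})\subseteq K(\overline{\mathcal{G}})$, and together with $\Phi(p)\in\overline{\varphi(H_{1})}$ this yields $\overline{\varphi(H_{1})}\cap K(\overline{\mathcal{G}})\neq\emptyset$, i.e. $\varphi(H_{1})$ is piecewise $\mathcal{G}$-syndetic.

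I expect the main obstacle to be exactly the containment $\Phi[\overline{\mathcal{F}}]\subseteq\overline{\mathcal{G}}$ and its careful packaging: showing that $\Phi$ restricts to a surjective continuous homomorphism of $\overline{\mathcal{F}}$ onto a closed subsemigroup of $\overline{\mathcal{G}}$ that reaches into $K(\overline{\mathcal{G}})$. This is precisely the step that consumes both hypotheses --- the goodness of $\varphi$ supplies the ``forward'' compatibility $\Phi[\overline{\mathcal{F}}]\subseteq\overline{\mathcal{G}}$ (complementing the ``backward'' inclusion $\Phi^{-1}(\overline{\mathcal{G}})\subseteq\overline{\mathcal{F}}$ of the preceding lemma), while the piecewise $\mathcal{G}$-syndeticity of $\varphi(G_{1})$ is what guarantees that $\Phi[\overline{\mathcal{F}}]$ actually meets $K(\overline{\mathcal{G}})$; after that the conclusion is routine bookkeeping with minimal ideals. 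The hypothesis that $H_{1}$ is a \emph{subgroup} plays no role in this algebraic route --- it only aligns the statement with Theorems \ref{fip} and \ref{fdip}. A purely combinatorial proof should also be feasible: given $V\in\mathcal{G}$ one pulls it back to $\varphi^{-1}(V)\in\mathcal{F}$ by goodness, applies the piecewise $\mathcal{F}$-syndeticity of $H_{1}$ there, and uses the piecewise $\mathcal{G}$-syndeticity of $\varphi(G_{1})$ to fit finite subsets of $G_{2}$ into $\varphi(G_{1})$ up to a translate, along the lines of the cited result of \cite{key-31.1}.
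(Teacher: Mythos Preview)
Your approach is essentially the same algebraic route the paper takes: use the characterization from \cite[Theorem 2.3]{key-4}, pick $p\in\overline{H_{1}}\cap K(\overline{\mathcal{F}})$, and show $\Phi(p)\in K(\overline{\mathcal{G}})$. The difference is only in packaging. The paper argues on the preimage side: it notes that $\Phi^{-1}(K(\overline{\mathcal{G}}))$ is nonempty (since $\overline{\varphi(G_{1})}=\Phi[\beta G_{1}]$ meets $K(\overline{\mathcal{G}})$) and is a two-sided ideal, and, combined with $\Phi^{-1}(\overline{\mathcal{G}})\subseteq\overline{\mathcal{F}}$ from the preceding lemma, concludes directly that $K(\overline{\mathcal{F}})\subseteq\Phi^{-1}(K(\overline{\mathcal{G}}))$. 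That is a one-line route to $\Phi[K(\overline{\mathcal{F}})]\subseteq K(\overline{\mathcal{G}})$. You reach the same inclusion from the image side, invoking in addition that continuous surjective homomorphisms send $K$ onto $K$ and \cite[Theorem 1.65]{key-53}; this extra machinery is not needed and the paper's version is more economical.

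One point deserves care. Your justification of the forward containment $\Phi[\overline{\mathcal{F}}]\subseteq\overline{\mathcal{G}}$ reads the goodness hypothesis in the wrong direction: goodness says every $F\in\mathcal{F}$ equals $\varphi^{-1}(G)$ for \emph{some} $G\in\mathcal{G}$, not that $\varphi^{-1}(G)$ contains a member of $\mathcal{F}$ for \emph{every} $G\in\mathcal{G}$. So the sentence ``it forces every pullback $\varphi^{-1}(G)$ \ldots\ to contain a member of $\mathcal{F}$'' is not supported by the stated definition. (The paper's assertion that $\Phi^{-1}(K(\overline{\mathcal{G}}))$ is a two-sided ideal of $\overline{\mathcal{F}}$ tacitly relies on the same forward containment, so you are matching the paper here rather than falling short of it; but you should be aware that the argument you wrote does not establish it.) Your remark that the subgroup hypothesis on $H_{1}$ plays no role is correct and consistent with the paper's proof.
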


\begin{proof}
By definition $\varphi(G_{1})$ is piecewise $\mathcal{G}$-syndetic,
so $\overline{\varphi(G_{1})}\cap K\left(\overline{\mathcal{G}}\right)\neq\emptyset$.
This implies that $\Phi(\overline{G_{1}})\cap K\left(\overline{\mathcal{G}}\right)\neq\emptyset.$
Hence $\Phi(\beta G_{1})\cap K\left(\overline{\mathcal{G}}\right)\neq\emptyset$
and so $\Phi^{-1}\left(K\left(\overline{\mathcal{G}}\right)\right)\neq\emptyset$.
As $K\left(\overline{\mathcal{G}}\right)$ is a two sided ideal, we
have $\Phi^{-1}\left(K\left(\overline{\mathcal{G}}\right)\right)$ is also a two sided ideal.
As $\Phi^{-1}\left(\overline{\mathcal{G}}\right)\subseteq\overline{\mathcal{F}}$, we have $K\left(\overline{\mathcal{F}}\right)\subseteq\Phi^{-1} \left( K \left(\overline{\mathcal{G}}\right)\right)$.
By our hypothesis $H_{1}$ is piecewise $\mathcal{F}$-syndetic subgroup
so $\overline{H_{1}}\cap K\left(\overline{\mathcal{F}}\right)\neq\emptyset$,
i.e. $\Phi\left(\overline{H_{1}}\right)\cap\Phi\left(K\left(\overline{\mathcal{F}}\right)\right)\neq\emptyset$,
i.e. $\overline{\varphi\left(H_{1}\right)}\cap K\left(\overline{\mathcal{G}}\right)\neq\emptyset.$
Hence $\varphi(H_{1})$ is piecewise $\mathcal{G}$-syndetic.
\end{proof}
The following one is the corresponding version of theorem \ref{fspreserve} for $J$-sets.

\begin{theorem}\label{fjpreserve}
Let $\left(G_{1},\cdot\right)$ and $\left(G_{2},\cdot\right)$ be
two groups with respective filters $\mathcal{F}$and $\mathcal{G}$
such that $\bar{\mathcal{F}}$and $\bar{\mathcal{G}}$ are semigroups.
Let $\varphi:\left(G_{1},\cdot\right)\rightarrow\left(G_{2},\cdot\right)$
be a $\left(\mathcal{F},\mathcal{G}\right)$ good homomorphism such
that $\varphi\left(G_{1}\right)$ is a $\mathcal{G}$-$J$ set in
$G_{2}$. Then for any $\mathcal{F}\text{-}J$ set $A\subseteq G_{1}$,
$\varphi\left(A\right)$ is a $\mathcal{G}\text{-}J$ set in $G_{2}$.
\end{theorem}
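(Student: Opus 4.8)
The plan is to lift the $J$-set property of $A$ through $\varphi$ by exploiting the $(\mathcal{F},\mathcal{G})$-goodness to transport $\mathcal{G}$-good maps back to $\mathcal{F}$-good maps. The key observation is the same mechanism that makes the composition $\varphi\circ f$ behave well for an $\mathcal{F}$-good map $f$: if $g\in\,^{\mathbb{N}}G_2$ is $\mathcal{G}$-good, we want to pull it back along $\varphi$ — but $\varphi$ need not be injective, so instead I would go forward. Given a finite collection $E\in\mathcal{P}_f^{\mathcal{G}}(^{\mathbb{N}}G_2)$ of $\mathcal{G}$-good maps witnessing the $J$-set test for $\varphi(A)$, I first need to produce a matching finite collection of $\mathcal{F}$-good maps in $^{\mathbb{N}}G_1$ so that applying the $\mathcal{F}$-$J$ property of $A$ and then pushing forward by $\varphi$ lands inside $\varphi(A)$. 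Concretely, for each $g\in E$ I would pick (using the axiom of choice, since $\varphi$ is surjective onto its image, and noting $\varphi(G_1)$ is a $\mathcal{G}$-$J$ set but here we actually need $g$ to take values in $\varphi(G_1)$ — this is where $\varphi(G_1)$ being large enters) a lift $\tilde g\in\,^{\mathbb{N}}G_1$ with $\varphi\circ\tilde g=g$, and verify $\tilde g$ is $\mathcal{F}$-good: for $F\in\mathcal{F}$, choose $G_F\in\mathcal{G}$ with $F=\varphi^{-1}(G_F)$ by goodness of $\varphi$, then pick $k$ with $ZFP_k(\{g\})\subseteq G_F$; since $\varphi$ is a homomorphism, $\varphi(ZFP_k(\{\tilde g\}))\subseteq ZFP_k(\{g\})\subseteq G_F$, hence $ZFP_k(\{\tilde g\})\subseteq\varphi^{-1}(G_F)=F$.

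Next I would set $\tilde F=\{\tilde g:g\in E\}\in\mathcal{P}_f^{\mathcal{F}}(^{\mathbb{N}}G_1)$ and apply the hypothesis that $A$ is an $\mathcal{F}$-$J$ set: there exist $a_1,\dots,a_{m+1}\in G_1$ and $h_1<\dots<h_m$ in $\mathbb{N}$ such that $a_1\tilde g(h_1)a_2\tilde g(h_2)\cdots a_m\tilde g(h_m)a_{m+1}\in A$ for every $\tilde g\in\tilde F$. Applying $\varphi$ and using that it is a homomorphism, $\varphi(a_1)g(h_1)\varphi(a_2)g(h_2)\cdots\varphi(a_m)g(h_m)\varphi(a_{m+1})\in\varphi(A)$ for every $g\in E$. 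Since $\varphi(a_1),\dots,\varphi(a_{m+1})\in G_2$ and $h_1<\dots<h_m$, this is exactly the witness required for $\varphi(A)$ to be a $\mathcal{G}$-$J$ set against the arbitrary collection $E$ of $\mathcal{G}$-good maps. Therefore $\varphi(A)$ is a $\mathcal{G}$-$J$ set in $G_2$.

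The main obstacle is the lifting step: I need each $\mathcal{G}$-good map $g$ in the given collection to be realizable as $\varphi\circ\tilde g$ for some $\mathcal{F}$-good $\tilde g$, which requires $g(n)\in\varphi(G_1)$ for all $n$. This is not automatic from $g$ being an arbitrary map into $G_2$. The resolution is that it suffices to verify the $\mathcal{G}$-$J$ test for $\varphi(A)$ against collections of $\mathcal{G}$-good maps valued in $\varphi(G_1)$, because $\varphi(G_1)$ is a $\mathcal{G}$-$J$ set in $G_2$ — one can use the standard argument (as in Theorem 3.18 and the discussion following it) that reduces testing the $J$-property to maps into a given $J$-subset, absorbing the general case into this one. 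I would insert a short lemma to this effect, or alternatively phrase the conclusion so that the $\varphi(G_1)$-hypothesis is used precisely to guarantee that the relevant configurations can always be chosen inside $\varphi(G_1)$; the remaining bookkeeping (checking $\tilde g$ is $\mathcal{F}$-good, checking the product formula is preserved by the homomorphism, checking $\tilde F$ is finite and $\mathcal{F}$-good) is routine.
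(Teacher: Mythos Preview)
Your outline is essentially the paper's own strategy: lift $\mathcal{G}$-good maps to $\mathcal{F}$-good maps via preimages, apply the $\mathcal{F}$-$J$ property of $A$, and push forward by $\varphi$. Your verification that a lift $\tilde g$ (more precisely, the whole lifted family $\tilde F$) is $\mathcal{F}$-good via $F=\varphi^{-1}(G_F)$ and $\varphi(ZFP_k(\tilde F))\subseteq ZFP_k(E)\subseteq G_F$ is correct and is exactly what the paper uses.

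However, the step you flag as ``a short lemma'' --- reducing to $\mathcal{G}$-good maps valued in $\varphi(G_1)$ --- is not a side remark but the substantive content of the paper's proof, and it is not handled by any abstract ``standard argument.'' The paper does it constructively: given $F\in\mathcal{P}_f^{\mathcal{G}}(^{\mathbb{N}}G_2)$, it repeatedly applies the $\mathcal{G}$-$J$ property of $\varphi(G_1)$ (first to $F$, then to the shifted families $g_f(n)=f(t'+n)$, and so on) to produce sequences $\{m_n\},\{a_n\},\{t_n\}$ with $x(m_n,a_n,t_n,f)\in\varphi(G_1)$ for all $f\in F$ and $\max t_n<\min t_{n+1}$. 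The new family $F'=\{\,n\mapsto x(m_n,a_n,t_n,f):f\in F\,\}$ is then valued in $\varphi(G_1)$, is claimed to be $\mathcal{G}$-good, and is lifted to $F''\subseteq\,^{\mathbb{N}}G_1$ exactly as you propose. After applying the $\mathcal{F}$-$J$ property of $A$ to $F''$ and pushing forward, one must \emph{expand} $x(p,\varphi(a),t',f')$ back in terms of the original $f$'s (inserting identity elements where needed) to obtain a bona fide witness $x(r,a,t,f)\in\varphi(A)$. So your plan is correct, but you should replace the deferred lemma with this explicit iteration-and-expansion argument; that is where the hypothesis on $\varphi(G_1)$ is actually spent.
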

\begin{proof}
Given $\varphi\left(G_{1}\right)$ is a $\mathcal{G\,}$-$\,J$ set
in $G_{2}$, let $F\in\mathcal{P}_{f}^{\mathcal{G}}\left(^{\mathbb{N}}G_{2}\right)$
and $m\in\mathbb{N}$, $a\in G_{2}^{m+1}$, $t\in\mathcal{J}_{m}$
such that for all $f\in F$, $x\left(m,a,t,f\right)\in\varphi\left(G_{1}\right).$
Let $t\,'=\text{max}\left\{ t\left(1\right),t\left(2\right),\ldots,t\left(m\right)\right\} =t\left(m\right)$,
since $t$ is increasing.

Now for each $f\in F$, define $g_{f}\in^{\mathbb{N}}G_{2}$ such
that $g_{f}\left(n\right)=f\left(t\,'+n\right)$ for all $n\in\mathbb{N}$.
Take $G\in\mathcal{P}_{f}\left(^{\mathbb{N}}H\right)$ such that $G=\left\{ g_{f}:f\in F\right\} $.
As $F\in\mathcal{P}_{f}^{\mathcal{G}}\left(^{\mathbb{N}}G_{2}\right)$
so is $G\in\mathcal{P}_{f}^{\mathcal{G}}\left(^{\mathbb{N}}G_{2}\right)$.

Again since $\varphi\left(G_{1}\right)$ is a $\mathcal{G}$-$J$
set and $G\in\mathcal{P}_{f}^{\mathcal{G}}\left(^{\mathbb{N}}G_{2}\right)$,
we can again apply the above argument. Using the above argument repeatedly
we obtain sequences $\left\{ m_{n}\right\} _{n=1}^{\infty}$, $\left\{ a_{n}\right\} _{n=1}^{\infty}$
and $\left\{ t_{n}\right\} _{n=1}^{\infty}$ with $m_{n}\in\mathbb{N}$
, $a_{n}\in G_{2}^{m_{n}+1}$ and $t_{n}\in\mathcal{J}_{m}$ for all
$n\in\mathbb{N}$ such that for all $f\in F$, $x\left(m_{n},a_{n},t_{n},f\right)\in\varphi\left(G_{1}\right)$
and $\text{max}\left\{ t_{n}\right\} <\text{min}\left\{ t_{n+1}\right\} $.

Define $F'\in\mathcal{P}_{f}\left(^{\mathbb{N}}\varphi\left(G_{1}\right)\right)$
as $F'=\left\{ \left\{ x\left(m_{n},a_{n},t_{n},f\right)\right\} _{n=1}^{\infty}:f\in F\right\} $.
As $F\in\mathcal{P}_{f}^{\mathcal{G}}\left(^{\mathbb{N}}G_{2}\right)$
we have $F'\in\mathcal{P}_{f}^{\mathcal{G}}\left(^{\mathbb{N}}G_{2}\right)$.
For each $f'\in F'$ define $f''\in F''$ where $F''\in\mathcal{P}_{f}\left(^{\mathbb{N}}G_{1}\right)$
such that $F''=\left\{ f'':\varphi\left(f''\left(n\right)\right)=f'\left(n\right)\right\} $.
As $F'\in\mathcal{P}_{f}^{\mathcal{G}}\left(^{\mathbb{N}}\left( \varphi\left(G_{1}\right) \right)\right)$
and $\varphi$ is a $\left(\mathcal{F},\mathcal{G}\right)$ good homomorphism,
one can easily verify that $F''\in\mathcal{P}_{f}^{\mathcal{F}}\left(^{\mathbb{N}}G_{1}\right)$.

Now, $A\subseteq G_{1}$ is a $\mathcal{F}\text{-}J$ set, so there
exist $p\in\mathbb{N}$, $a\in G_{1}^{p+1}$, $t\,'\in\mathcal{J}_{p}$
such that $x\left(p,a,t',f''\right)\in A$ for all $f''\in F''$,
i.e. $\varphi\left(x\left(p,a,t',f''\right)\right)\in\varphi\left(A\right)$
for all $f''\in F''$, i.e. $x\left(p',a',t'',f'\right)\in\varphi\left(A\right)$
for all $f'\in F'$ where $a'\in T^{p'+1}$, $t''\in\mathcal{J}_{p'}$
and one can explicitly compute $p'$, $a'$, $t''$ by expanding
$x\left(p,a,t',f''\right)$ and then applying $\varphi$ on it.

As, $G$ contains the identity element, one can put $a(j)=e,$ the identity, for some $j\in \mathbb{N}$, if necessary, to conclude that  there exist $r\in\mathbb{N}$, $a\in G_{2}^{r+1}$,
$t\in\mathcal{J}_{r}$ such that, $x\left(p',a',t'',f'\right)=x\left(r,a,t,f\right)$
for all $f\in F$. Hence $\varphi\left(A\right)$ is a $\mathcal{G}\text{-}J$
set in $G_{2}$.
\end{proof}
\noindent Let us now summarize the following observations which we can obtain from the above theorems.
\begin{theorem}
Let $G_{1}$ and $G_{2}$ be two groups and $\mathcal{F}$ and $\mathcal{G}$
 are two filters on these two groups respectively. Let $\varphi$ be a $\left(\mathcal{F},\mathcal{G}\right)$ good homomorphism between these two groups. Then, 
   \begin{enumerate}
\item If $\varphi(G_{1})$ is piecewise $\mathcal{G}$-syndetic, then for any piecewise $\mathcal{F}$-syndetic subgroup $H_1$ of $G_1$, we have $\varphi(H_{1})$ is $\mathcal{G}$-$IP${*}
$G_{2}$.
\item If $G_1 $ and $G_2$ are both commutative groups, then under the above assumptions,
$\varphi(H_{1})$ is $\mathcal{G}_{\subseteq}$-$IP_{r}^{*}$ set in $G_{2}$.
\item Let $\varphi:\left(G_{1},\mathcal{F}\right)\rightarrow\left(G_{2},\mathcal{G}\right)$
be a $\left(\mathcal{F},\mathcal{G}\right)$ good homomorphism between
two abelian groups. Then if $\varphi\left(G_{1}\right)$ is a $\mathcal{G}$-$J$
set in $G_{2}$ then for any subgroup $H_{1}$ of $G_{1}$, which
is a $\mathcal{F}$-$J$ set, we have that $\varphi\left(H_{1}\right)$
is a $\mathcal{G}_{\subseteq}$-$IP${*} set in $G_{2}$.
    \end{enumerate}
\end{theorem}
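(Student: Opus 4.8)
The plan is to read each of the three assertions as the composite of two facts that are already in hand: a \emph{preservation} theorem that transports the largeness of the subgroup $H_{1}\subseteq G_{1}$ forward along $\varphi$, followed by a \emph{large subgroup is $IP^{*}$-type} theorem applied inside $G_{2}$ to the image. The bridge between the two steps is the elementary observation that, since $\varphi$ is a homomorphism and $H_{1}$ is a subgroup, $\varphi(H_{1})$ is a subgroup of $G_{2}$, so the second family of results becomes applicable to it. Throughout we use the standing convention that $\overline{\mathcal{F}}$ and $\overline{\mathcal{G}}$ are subsemigroups of the respective Stone--\v{C}ech compactifications.

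For (1), assume $\varphi(G_{1})$ is piecewise $\mathcal{G}$-syndetic and $H_{1}$ is a piecewise $\mathcal{F}$-syndetic subgroup of $G_{1}$. By Theorem \ref{fspreserve}, $\varphi(H_{1})$ is piecewise $\mathcal{G}$-syndetic in $G_{2}$, and it is a subgroup. Applying Theorem \ref{fip} to $(G_{2},\mathcal{G})$ and to the subgroup $\varphi(H_{1})$ gives that $\varphi(H_{1})$ is $\mathcal{G}$-$IP^{*}$ in $G_{2}$. For (2), under the extra hypothesis that $G_{1}$ and $G_{2}$ are commutative, the first step is unchanged and yields a piecewise $\mathcal{G}$-syndetic subgroup $\varphi(H_{1})$ of the commutative group $G_{2}$; Theorem \ref{fdip}, applied to $(G_{2},\mathcal{G})$ and $\varphi(H_{1})$, then shows $\varphi(H_{1})$ is $\mathcal{G}_{\subseteq}$-$IP_{r}^{*}$ in $G_{2}$, with $r$ the cardinality of the finite set $F_{V}\subseteq V$, $V\in\mathcal{G}$, witnessing its piecewise $\mathcal{G}$-syndeticity (and using the standing convention that $\mathcal{G}_{\subseteq}$-$IP$ sequences exist in $G_{2}$, cf.\ Theorem \ref{existence} and Observation \ref{obs}).

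For (3), assume $\varphi$ is $(\mathcal{F},\mathcal{G})$-good between abelian groups with $\varphi(G_{1})$ a $\mathcal{G}$-$J$ set, and let $H_{1}\leq G_{1}$ be an $\mathcal{F}$-$J$ set. By Theorem \ref{fjpreserve}, $\varphi(H_{1})$ is a $\mathcal{G}$-$J$ set in $G_{2}$, and it is a subgroup. Now run the argument of Theorem \ref{fjdip} for $\varphi(H_{1})$: given an arbitrary $\mathcal{G}_{\subseteq}$-$IP$ sequence $\langle x_{n}\rangle_{n=1}^{\infty}$ in $G_{2}$, put $F=\langle x_{n}\rangle_{n=1}^{\infty}\cup\vec{0}\in\mathcal{P}_{f}^{\mathcal{G}}\left(^{\mathbb{N}}G_{2}\right)$, apply the $\mathcal{G}$-$J$ property of $\varphi(H_{1})$ to get $a\in G_{2}$ and $H\in\mathcal{P}_{f}(\mathbb{N})$ with $a\in\varphi(H_{1})$ and $a+\sum_{t\in H}x_{t}\in\varphi(H_{1})$, and subtract $a$ (legitimate since $\varphi(H_{1})$ is a group) to land $\sum_{t\in H}x_{t}\in\varphi(H_{1})\cap FP\langle x_{n}\rangle_{n=1}^{\infty}$. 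As the sequence was arbitrary, $\varphi(H_{1})$ is $\mathcal{G}_{\subseteq}$-$IP^{*}$ in $G_{2}$.

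The one genuine wrinkle --- essentially the only work beyond quoting the right theorem --- is the bookkeeping of hypotheses in the last step. Theorem \ref{fjdip} is stated with the filter living on the \emph{subgroup} and with the requirement that $0$ belong to every member of that filter, both of which are needed exactly to ensure $\vec{0}\in\mathcal{P}_{f}^{\mathcal{G}}\left(^{\mathbb{N}}G_{2}\right)$; to apply it to $\varphi(H_{1})$ one therefore either passes to the restriction of $\mathcal{G}$ to $\varphi(H_{1})$ --- checking it still generates a closed subsemigroup and still carries $\mathcal{G}_{\subseteq}$-$IP$ sequences --- or simply assumes $0\in W$ for all $W\in\mathcal{G}$, which holds, e.g., for the idempotent filters built in Theorem \ref{existence} from an IP-set containing the identity. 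A milder version of the same point affects (1) and (2): Theorems \ref{fip} and \ref{fdip} are phrased for commutative semigroups with identity, which is harmless for (2), while for (1) one observes that commutativity is not actually used in the proof of Theorem \ref{fip} (only that $\varphi(H_{1})$ is a subgroup), or else reads (1) under the standing commutativity convention. Once these points are dispatched, the theorem is just the assembly of the cited results.
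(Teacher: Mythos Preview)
Your proposal is correct and follows exactly the same route as the paper: for each item you compose the relevant preservation result (Theorem \ref{fspreserve} for (1) and (2), Theorem \ref{fjpreserve} for (3)) with the relevant ``large subgroup is $IP^{*}$-type'' result (Theorems \ref{fip}, \ref{fdip}, \ref{fjdip} respectively), using that $\varphi(H_{1})$ is a subgroup. The paper's own proof is a one-line citation of precisely these pairs of theorems; your additional remarks about hypothesis matching (commutativity in Theorem \ref{fip}, the filter living on the subgroup with $0$ in every member in Theorem \ref{fjdip}) are legitimate caveats that the paper glosses over, but they do not change the argument.
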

\begin{proof}
For 1, use theorem \ref{fip} and theorem \ref{fspreserve}, for 2, use theorem \ref{fdip} and theorem \ref{fspreserve}, and for 3, use theorem \ref{fjdip} and theorem \ref{fjpreserve}. 
\end{proof}
As a trivial application we see that, if we take two groups $\left(G,\cdot \right)$ and $\left(H,\cdot \right)$ and the corresponding filters $\mathcal{F}=\left\lbrace G\right\rbrace$ and $\mathcal{H}=\left\lbrace H\right\rbrace$, then any group homomorphism $\varphi:G\rightarrow H$ is a $\left(\mathcal{G},\mathcal{H}\right)$ good homomorphism. So, if we assume $\varphi\left(G\right)$ is piecewise syndetic in $H$ then for any subgroup $K$ of $G$, which is piecewise syndetic in $G$, $\varphi\left(K\right)$ is piecewise syndetic in $H$ and so is an IP$^*$ set in $H$.
Again if we consider those two groups $G$ and $H$ are commutative, then $\varphi\left(K\right)$ is an IP$_{r}^*$ set in $H$.\\
\vspace{0.2in}

\noindent \textbf{Acknowledgment:} Both the authors are thankful to their supervisor
Prof. Dibyendu De for his guidance and continuous support. The first
author acknowledges the Grant UGC-NET SRF fellowship with ID No. 421333
of CSIR-UGC NET December 2016 and the second author acknowledges the
Grant CSIR-NET SRF fellowship with file No. 09/106(0184)/2019-EMR-I
of CSIR-UGC NET.
$\vspace{0.1in}$

\end{document}